\newcommand{\D}{\operatorname{\mathbb{D}}}
\newcommand{\C}{\operatorname{\mathbb{C}}}
\newcommand{\T}{\operatorname{\mathbb{T}}}
\newcommand{\A}{\operatorname{\mathcal{A}}}
\newcommand{\E}{\operatorname{\mathcal{E}}}
\newcommand{\Fil}{\operatorname{\mathcal{F}}}
\newcommand{\hil}{\operatorname{\mathcal{H}}}
\newcommand{\kil}{\operatorname{\mathcal{K}}}
\newcommand{\ol}{\overline }
\let\phi\varphi
 \DeclareMathOperator{\Ext}{Ext}
\newtheorem{lemma}{Lemma}[section]
\newtheorem{theorem}[lemma]{Theorem}
\newtheorem{proposition}[lemma]{Proposition}
\newtheorem{corollary}[lemma]{Corollary}
\theoremstyle{definition}
\newtheorem{definition}[lemma]{Definition}
\begin{document}
\author{Rapha\"el Clou\^atre}
\address{Department of Mathematics, Indiana University, 831 East 3rd Street,
Bloomington, IN 47405} \email{rclouatr@indiana.edu}
\title{On the unilateral shift as a Hilbert module over the disc algebra}
\date{\today}
\begin{abstract}
We study the unilateral shift (of arbitrary countable multiplicity)
as a Hilbert module over the disc algebra and the associated
extension groups. In relation with the problem of determining
whether this module is projective, we consider a special class of
extensions, which we call \emph{polynomial}. We show that the
subgroup of polynomial extensions of a contractive module by the
adjoint of the unilateral shift is trivial. The main tool is a
function theoretic decomposition of the Sz.-Nagy--Foias model space
for completely non-unitary contractions.
\end{abstract}
\maketitle

\section{Introduction}
In their pioneering work \cite{douglas1989},  Douglas and Paulsen
reformulated several interesting operator theoretic questions in the
language of module theory, and in doing so introduced the notion of
Hilbert modules over function algebras. This suggested the use of
cohomological methods to further the study of problems such as
commutant lifting. Naturally, the question of identifying those
Hilbert modules which are projective arose and attracted a lot of
interest. The first result in that direction was obtained by Carlson
and Clark in \cite{carlson1995}, where it was shown that a
contractive projective Hilbert module over the disc algebra $A(\D)$
must be similar to an isometric one. Soon thereafter, the same
authors along with Foias and Williams proved in \cite{carlson1994}
that isometric modules over $A(\D)$ are projective in the category
of \emph{contractive} Hilbert modules. This turns out to be an
equivalence, as was later shown by Ferguson in \cite{ferguson1997}.
In addition, the authors of \cite{carlson1994} show that unitary
modules over $A(\D)$ are projective in the larger category of
(non-necessarily contractive) Hilbert modules.

Projective Hilbert modules over $A(\D)$ are to this day still quite
mysterious. In fact, as things stand currently, unitary modules are
the only known instances of such objects. On the other hand, by the
results mentioned above a contractive projective module must be
similar to an isometric module. In view of the classical Wold-von
Neumann decomposition of an isometry, we see that the quest to
identify the contractive projective Hilbert modules over the disc
algebra is reduced to the following question: are unilateral shifts
projective? A consequence of Pisier's famous counterexample to the
Halmos conjecture (see \cite{pisier1997}) is that the answer is
negative in the case of infinite multiplicity. Whether or not things
are different for finite multiplicities is still an open problem.

We study extension groups associated to unilateral shifts viewed as
Hilbert modules over the disc algebra. With the notation established
in Section 2, our main result (Theorem \ref{t-extcontraction})
establishes the triviality of the the subgroup of elements $[X]\in
\Ext_{A(\D)}^1(T,S^*)$ such that $S^{*N}XT^N=0$ for some integer
$N\geq 0$ whenever $T$ is similar to a contraction (here $S^*$ is
the adjoint of the unilateral shift of arbitrary countable
multiplicity). In some sense, this supports the idea that the
unilateral shift is projective. However, the reader should keep in
mind that our result holds regardless of multiplicity and thus does
not capture the fact that the shift of infinite multiplicity is not
projective. The crucial ingredient for the proof of Theorem
\ref{t-extcontraction} is a decomposition of the Sz.-Nagy--Foias
model space $H(\Theta)$ which we think is of independent interest
(see Theorem \ref{t-ZEfunctmodel}).

There has been further work on the question
of projective Hilbert modules following the appearance of
\cite{carlson1995}, \cite{carlson1994} and \cite{ferguson1997}.
Generalizing the fact that unitary modules are projective over
$A(\D)$, it was shown in \cite{chen2000} that whenever the algebra
$A$ is a so-called unit modulus algebra and the module action can be
extended to an action of $C(\partial A)$ (here $\partial A$ denotes
the Shilov boundary of $A$), then the module is projective. An
earlier paper of Guo (see \cite{guo1999}) establishes using
essentially the same idea that the result holds for the ball algebra
$A(\mathbb{B}^N)$ under an additional continuity assumption on the
module action. This assumption was later removed by Didas and
Eschmeier in \cite{didas2006}, where domains more general than the
ball are considered. The case of the polydisc algebra $A(\D^N)$ was
first considered in \cite{carlson1997}, where results exhibiting a
sharp contrast with the one dimensional case were obtained. From the
point of view of reproducing kernel Hilbert spaces, Clancy and
McCullough showed in \cite{clancy1998} that the Hilbert space
$H^2(k)$ associated to a Nevanlinna-Pick kernel $k$ considered as a
Hilbert module over its multiplier algebra is projective in an
appropriate category.  The existence of a projective Hilbert module
over very general function algebras was established in
\cite{gulinskiy2003}. Note finally that the notion of Hilbert
modules and the question of projectivity have also been studied over
general operator algebras, see \cite{muhly1995}.

The paper is organized as follows. Section 2 introduces the
necessary preliminaries about Hilbert modules. In Section 3 we
develop some technical tools which are used in Section 5 to obtain
the main result. In the meantime, we examine in Section 4 some
simple examples and offer some explicit calculations of the objects
introduced in Section 3. Finally, in Section 6 we briefly address
the issue of non-contractive modules by considering operators of the
type constructed by Pisier in \cite{pisier1997}.

\section{Preliminaries}
Let $\hil$ be a Hilbert space and let $T:\hil \to \hil$ be a bounded
linear operator, which we indicate by $T\in B(\hil)$. Recall that
the operator $T$ is said to be \emph{polynomially bounded} if there
exists a constant $C>0$ such that for every polynomial $\phi$, we
have
$$
\|\phi(T)\|\leq C\|\phi\|_{\infty}
$$
where
$$
\|\phi\|_{\infty}=\sup_{|z|<1}|\phi(z)|.
$$
This inequality allows one to extend continuously the polynomial
functional calculus $\phi\mapsto \phi(T)$ to all functions $\phi$ in
the disc algebra $A(\D)$, which consists of the holomorphic
functions on $\D$ that are continuous on $\ol{\D}$ (throughout the
paper $\D$ denotes the open unit disc and $\T$ denotes the unit
circle).

If $T\in B(\hil)$ is a polynomially bounded operator, the map
$$
A(\D)\times \hil\to \hil
$$
$$
(\phi, h)\mapsto \phi(T)h
$$
gives rise to a structure of an $A(\D)$-module on $\hil$, and we say
that $(\hil, T)$ is a \emph{Hilbert module} (see \cite{douglas1989}
for more details). We only deal with $A(\D)$-modules in this paper,
so no confusion may arise regarding the underlying function algebra
and we usually do not mention it explicitely. Moreover, when the
underlying Hilbert space is understood, we slightly abuse
terminology and say that $T$ is a Hilbert module.

Given two Hilbert modules $(\hil_1,T_1)$ and $(\hil_2,T_2)$,  we can
consider the extension group $\Ext_{A(\D)}^1(T_2,T_1)$. This group
consists of equivalence classes of exact sequences
$$
0 \to \hil_1\to \kil\to \hil_2\to 0
$$
where $\kil$ is another Hilbert module and each map is a module
morphism. Rather than formally defining the equivalence relation and
the group operation, we simply use the following characterization
from \cite{carlson1995}.
\begin{theorem}\label{t-extchar}
Let $(\hil_1, T_1)$ and $(\hil_2, T_2)$ be Hilbert modules. Then,
the group
$$
\Ext_{A(\D)}^1(T_2,T_1)
$$
is isomorphic to $\A/\mathcal{J}$, where $\A$ is the space of
operators $X:\hil_2\to \hil_1$ for which the operator
$$
\left(
\begin{array}{cc}
T_1 & X \\
0 & T_2
\end{array}
\right)
$$
is polynomially bounded, and $\mathcal{J}$ is the space of operators
of the form $T_1L-LT_2$ for some bounded operator $L:\hil_2\to
\hil_1$.
\end{theorem}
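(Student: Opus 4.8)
The plan is to produce an explicit correspondence between equivalence classes of extensions and elements of $\A/\mathcal{J}$, and then to check that it is compatible with the group operations.

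First I would put an arbitrary extension $0 \to \hil_1 \xrightarrow{j} \kil \xrightarrow{q} \hil_2 \to 0$ into a standard form. Write $S$ for the operator defining the module structure on $\kil$. Exactness gives $\ran j = \ker q$, which is closed since $q$ is continuous, so the open mapping theorem shows that $j$ is bounded below and produces a bounded linear section $s : \hil_2 \to \kil$ of $q$ (invert the restriction of $q$ to $(\ker q)^{\perp}$). The map $\Phi : \hil_1 \oplus \hil_2 \to \kil$, $\Phi(h_1, h_2) = j h_1 + s h_2$, is then a bounded linear bijection, hence a Hilbert space isomorphism, and $\widetilde S := \Phi^{-1} S \Phi$ is an operator on $\hil_1 \oplus \hil_2$. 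Using $\Phi|_{\hil_1 \oplus 0} = j$, the identity $q \Phi = \pi_2$, and the intertwining relations $S j = j T_1$, $q S = T_2 q$ coming from the fact that $j$ and $q$ are module maps, one checks that $\widetilde S = \left( \begin{smallmatrix} T_1 & X \\ 0 & T_2 \end{smallmatrix} \right)$ for some bounded $X : \hil_2 \to \hil_1$; moreover $\widetilde S$ is polynomially bounded because it is similar to the polynomially bounded operator $S$, so $X \in \A$. Conversely every $X \in \A$ arises this way, from the extension of $\hil_2$ by $\hil_1$ whose total space is $\hil_1 \oplus \hil_2$ with operator $\left( \begin{smallmatrix} T_1 & X \\ 0 & T_2 \end{smallmatrix} \right)$, the canonical inclusion and projection being module maps by the same block computation; and $\Phi$ itself witnesses that the original extension is equivalent to this standard one.

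Next I would identify which operators yield equivalent extensions. An equivalence between the standard extensions attached to $X$ and $X'$ is a module isomorphism $\beta$ of $\hil_1 \oplus \hil_2$ that commutes with the canonical inclusions of $\hil_1$ and the canonical projections onto $\hil_2$; these two commutation conditions force $\beta = \left( \begin{smallmatrix} I & L \\ 0 & I \end{smallmatrix} \right)$ for some bounded $L : \hil_2 \to \hil_1$ (such a $\beta$ is automatically invertible). Writing out the intertwining relation $\beta \left( \begin{smallmatrix} T_1 & X' \\ 0 & T_2 \end{smallmatrix} \right) = \left( \begin{smallmatrix} T_1 & X \\ 0 & T_2 \end{smallmatrix} \right) \beta$ and comparing the $(1,2)$ entries yields $X - X' = L T_2 - T_1 L$, that is, $X - X' \in \mathcal{J}$ (which is a linear subspace); conversely any such $L$ defines a valid $\beta$. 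Since every extension is equivalent to a standard one, and equivalence is transitive, it follows that $[E] \mapsto [X]$ is a well-defined bijection of equivalence classes of extensions onto $\A / \mathcal{J}$.

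It remains to check that this bijection is a group isomorphism. The zero of $\Ext_{A(\D)}^1(T_2, T_1)$ is the split extension, which corresponds to $X = 0$. For additivity I would take the Baer sum of the standard extensions attached to $X$ and $X'$ --- form $E_X \oplus E_{X'}$, pull back along the diagonal $\hil_2 \to \hil_2 \oplus \hil_2$ and push out along the addition map $\hil_1 \oplus \hil_1 \to \hil_1$ --- and compute directly, with the associated block operators, that the outcome is the standard extension attached to $X + X'$. I expect the only genuinely delicate point of the whole argument to be the first step: confirming that exactness together with the module-map hypotheses force the diagonal blocks of $\widetilde S$ to be $T_1$ and $T_2$ \emph{exactly}, not merely similar copies, and that one really obtains a similarity $\Phi$ rather than just a bounded injection. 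Once the extension is in upper-triangular standard form, the rest is routine block-matrix algebra together with a short diagram chase for the group law.
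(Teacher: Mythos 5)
The paper itself gives no proof of this theorem --- it is quoted from Carlson and Clark \cite{carlson1995} --- so the only comparison available is with that standard argument, which is essentially what you reproduce: triangularize an arbitrary extension over $\hil_1\oplus\hil_2$ by means of a bounded linear section of the quotient map (so that the module operator becomes $\left(\begin{smallmatrix} T_1 & X\\ 0 & T_2\end{smallmatrix}\right)$ with $X\in\A$ by similarity), observe that equivalences between two such standard extensions are exactly the maps $\left(\begin{smallmatrix} I & L\\ 0 & I\end{smallmatrix}\right)$, which identifies equivalence classes with $\A/\mathcal{J}$, and match the Baer sum with addition of the corner entries. Your outline is correct as it stands: the sign discrepancy $LT_2-T_1L$ versus $T_1L-LT_2$ is immaterial since $L$ runs over all bounded operators, and the Baer sum computation you leave as a sketch is indeed a routine block-matrix verification.
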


Extension groups are invariant under similarity: if $(\hil_1',T_1')$
and $(\hil_2',T_2')$ are Hilbert modules which are similar to
$(\hil_1,T_1)$ and $(\hil_2,T_2)$ respectively, then the groups
$\Ext_{A(\D)}^1(T_2,T_1)$ and $\Ext_{A(\D)}^1(T_2',T_1')$ are
isomorphic.

In view of Theorem \ref{t-extchar}, the next lemma is useful. Before
stating it, we recall a well-known estimate. Let
$$
D:A(\D)\to A(\D)
$$
be defined as
$$
(Df)(z)=\frac{1}{z}(f(z)-f(0))
$$
for every $z\in \D$ and $f\in A(\D)$. It is a classical fact that
there exists a constant $M>0$ such that
$$
\|D^n\|\leq M(1+\log n) $$ for every $n\geq 1$.

\begin{lemma}\label{l-finitebounded}
Let $(\hil_1, T_1)$ and $(\hil_2, T_2)$ be Hilbert modules. Let
$X:\hil_2 \to \hil_1$ be a bounded operator such that
$T_1^NXT_2^N=0$ for some integer $N\geq 0$. Then, the operator
$R:\hil_1\oplus \hil_2\to \hil_1\oplus \hil_2$ defined as
$$
R=\left(
\begin{array}{cc}
T_1 & X\\
0 & T_2
\end{array}
\right)
$$
is polynomially bounded.
\end{lemma}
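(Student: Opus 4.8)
The plan is to estimate $\|\phi(R)\|$ directly for an arbitrary polynomial $\phi(z)=\sum_{k}a_k z^k$ and to produce a bound of the form $K\|\phi\|_\infty$ with $K$ independent of $\phi$. Because $R$ is upper triangular, a straightforward computation (using the formula for $R^k$) gives
$$
\phi(R)=\begin{pmatrix} \phi(T_1) & Y(\phi) \\ 0 & \phi(T_2) \end{pmatrix}, \qquad Y(\phi)=\sum_{k\geq 1} a_k \sum_{j=0}^{k-1} T_1^{j}\,X\,T_2^{k-1-j}.
$$
Since $(\hil_1,T_1)$ and $(\hil_2,T_2)$ are Hilbert modules, $T_1$ and $T_2$ are polynomially bounded, say with a common constant $C$, so that $\|\phi(T_1)\|,\|\phi(T_2)\|\leq C\|\phi\|_\infty$ and also $\|T_1^j\|,\|T_2^j\|\leq C$ for every $j\geq 0$. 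As the norm of a $2\times 2$ operator matrix is at most the sum of the norms of its entries, the whole problem reduces to bounding $\|Y(\phi)\|$ by a constant multiple of $\|\phi\|_\infty$.

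The key observation is that the hypothesis $T_1^N X T_2^N=0$ annihilates every summand of $Y(\phi)$ with $j\geq N$ \emph{and} $k-1-j\geq N$, since such a term factors as $T_1^{j-N}(T_1^N X T_2^N)T_2^{k-1-j-N}$. Thus only the two ``boundary layers'' $\{j\leq N-1\}$ and $\{k-1-j\leq N-1\}$ survive, and an inclusion--exclusion argument, together with the elementary fact that $\sum_{k\geq m+1}a_k z^{k-1-m}$ is precisely the Taylor expansion of $D^{m+1}\phi$, rewrites
$$
Y(\phi)=\sum_{j=0}^{N-1} T_1^{j}\,X\,(D^{j+1}\phi)(T_2)\;+\;\sum_{i=0}^{N-1}(D^{i+1}\phi)(T_1)\,X\,T_2^{i}\;-\;\sum_{i,j=0}^{N-1} a_{i+j+1}\,T_1^{j}\,X\,T_2^{i}.
$$
Some care is needed with the index ranges when $k$ is small, and $D^{j+1}\phi$ must be read as an element of $A(\D)$ (not as $T_\ell^{-(j+1)}(\cdots)$, since $T_\ell$ need not be invertible), so that polynomial boundedness of $T_\ell$ applies to it. I expect this bookkeeping to be the only genuinely delicate point in the proof.

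Once this identity is in hand the estimates are immediate. By the Cauchy estimates $|a_{i+j+1}|\leq\|\phi\|_\infty$, and for $m\geq 1$ one has $\|(D^{m}\phi)(T_\ell)\|\leq C\|D^{m}\phi\|_\infty\leq C\|D^{m}\|\,\|\phi\|_\infty\leq CM(1+\log m)\|\phi\|_\infty$ by the estimate on $\|D^m\|$ recalled above. Since all three sums in the displayed formula are finite, with $O(N)$, $O(N)$ and $O(N^2)$ terms respectively, we obtain $\|Y(\phi)\|\leq K\|\phi\|_\infty$ for a constant $K$ depending only on $N$, $C$, $M$ and $\|X\|$, whence $\|\phi(R)\|\leq(2C+K)\|\phi\|_\infty$ and $R$ is polynomially bounded. (In fact the logarithmic bound on $\|D^m\|$ is not essential here, as the sums are finite; any finite bound on $\|D\|$ would suffice.)
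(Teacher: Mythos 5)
Your proof is correct and follows essentially the same route as the paper: the same off-diagonal formula for $\phi(R)$, the same observation that $T_1^NXT_2^N=0$ kills all but the two boundary layers, and the same identification of the surviving tail sums with $D^{j+1}\phi$ evaluated at $T_1$ or $T_2$, controlled by the $\|D^n\|\leq M(1+\log n)$ estimate. The only difference is bookkeeping: you use inclusion--exclusion over the full sum (subtracting the finitely many corner terms $a_{i+j+1}T_1^jXT_2^i$, which Cauchy's estimates handle), whereas the paper first disposes of polynomials of degree $<2N$ by a crude bound and then notes that for $k\geq 2N$ the two layers are disjoint; both versions are sound.
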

\begin{proof}
Choose a $\phi(z)=\sum_{k=0}^d a_k z^k$. A quick
calculation shows that
$$
\phi(R)= \left(
\begin{array}{cc}
\phi(T_1) & \delta_X(\phi)\\
0 & \phi(T_2)
\end{array}
\right)
$$
where
$$
\delta_X(\phi)=\sum_{k=1}^da_k \sum_{j=0}^{k-1}T_1^j XT_2^{k-1-j}.
$$
Since both $T_1$ and $T_2$ are polynomially bounded, there exist
constants $C_1,C_2>0$ independent of $\phi$ such that
$$
\|\phi(T_1)\|\leq C_1\|\phi\|_{\infty}
$$
and
$$
\|\phi(T_2)\|\leq C_2\|\phi\|_{\infty}.
$$
Therefore, we simply need to verify that there exists a constant
$C>0$ independent of $\phi$ such that
$$
\|\delta_X(\phi)\|\leq C\|\phi\|_{\infty}.
$$
We have
\begin{align*}
\|\delta_X(\phi)\|&\leq \sum_{k=1}^{d} |a_k|
\sum_{j=0}^{k-1}\|T_1^jXT_2^{k-1-j}\|\\
&\leq C_1 C_2 \|X\| \sum_{k=1}^{d} k|a_k|\\
&= C_1 C_2 \|X\| \sum_{k=1}^{d} k\left|
\frac{\phi^{(k)}(0)}{k!}\right|\\
\end{align*}
and in light of the classical Cauchy estimates, we find
\begin{equation}\label{e-deltanorm}
\|\delta_X(\phi)\|\leq C_1 C_2
\frac{d(d+1)}{2}\|X\|\|\phi\|_{\infty}.
\end{equation}
In particular, if we set
$$
C=C_1 C_2 N(2N-1)\|X\|,
$$
which depends only on $X,T_1,T_2$ and $N$, then
$$
\|\delta_X(\phi)\|\leq C \|\phi\|_{\infty}
$$
whenever $\phi$ has degree at most $2N-1$. We focus therefore on the
case where $d\geq 2N$. We have
\begin{equation}\label{e-delta}
\delta_X(\phi)= \sum_{k=1}^{2N-1} a_k
\sum_{j=0}^{k-1}T_1^jXT_2^{k-1-j}+\sum_{k=2N}^{d} a_k
\sum_{j=0}^{k-1}T_1^jXT_2^{k-1-j}.
\end{equation}
Since
$$
\left\|\sum_{k=1}^{2N-1} a_k
\sum_{j=0}^{k-1}T_1^jXT_2^{k-1-j}\right\|\leq C \|\phi\|_{\infty}
$$
we are left with estimating the second sum in (\ref{e-delta}), where
$k\geq 2N$. By assumption, we know that $T_1^jXT_2^{k-1-j}\neq 0$
only when $j\leq N-1$ or $k-1-j\leq N-1$. This allows us to write
\begin{align*}
\sum_{k=2N}^{d} a_k \sum_{j=0}^{k-1}T_1^jXT_2^{k-1-j} &=\sum_{k=2N}^{d} a_k \sum_{j=0}^{N-1}T_1^jXT_2^{k-1-j}+\sum_{k=2N}^{d} a_k \sum_{j=k-N}^{k-1}T_1^jXT_2^{k-1-j}\\
&=\sum_{k=2N}^{d} a_k \sum_{j=0}^{N-1}T_1^jXT_2^{k-1-j}+\sum_{k=2N}^{d} a_k \sum_{j=0}^{N-1}T_1^{k-1-j}XT_2^{j}\\
&=\sum_{j=0}^{N-1} (T_1^j X\Phi_j(T_2)+\Phi_j(T_1)XT_2^j)
\end{align*}
where
$$
\Phi_j(z)=\sum_{k=2N}^d a_k z^{k-1-j}
$$
whenever $0\leq j \leq N-1$. Notice now that
$$
\Phi_j(z)+\sum_{k=j+1}^{2N-1} a_k z^{k-1-j}= (D^{j+1} \phi)(z)
$$
whence
\begin{align*}
\|\Phi_j\|_{\infty}&\leq \| D^{j+1} \phi\|_{\infty}+
\sum_{k=0}^{2N-1}|a_k|\\
&= \| D^{j+1} \phi\|_{\infty}+ \sum_{k=0}^{2N-1}\left|
\frac{\phi^{(k)}(0)}{k!}\right|
\end{align*}
for every  $0\leq j \leq N-1$. Another use of the Cauchy estimates
along with the remark preceding the statement of the lemma implies
the existence of a constant $C'>0$ depending only on $N$ such that
$$
\|\Phi_j\|_{\infty}\leq C' \|\phi\|_{\infty}
$$
for every $0\leq j \leq N-1$. Thus,
$$
\left\|\sum_{k=2N}^{d} a_k
\sum_{j=0}^{k-1}T_1^jXT_2^{k-1-j}\right\|\leq 2 N C'C_1 C_2 \|X\|
\|\phi\|_{\infty}
$$
and
\begin{align*}
\|\delta_X(\phi)\|&\leq \left\|\sum_{k=0}^{2N-1} a_k \sum_{j=0}^{k-1}T_1^jXT_2^{k-1-j}\right\|+\left\|\sum_{k=2N}^{d} a_k \sum_{j=0}^{k-1}T_1^jXT_2^{k-1-j}\right\|\\
&\leq C''\|\phi\|_{\infty}
\end{align*}
where $C''>0$ depends only on $N, X, T_1, T_2$. The proof is
complete.
\end{proof}

An important question in the study of extension groups is that of
determining which Hilbert modules $(\hil_2,T_2)$ have the property
that
$$
\Ext_{A(\D)}^1(T_2,T_1)=0
$$
for every Hilbert module $(\hil_1,T_1)$.  Such Hilbert modules are
said to be \emph{projective}. It is easy to verify using Theorem
\ref{t-extchar} that the map $[X]\mapsto [X^*]$ establishes an
isomorphism between the groups $\Ext^1_{A(\D)}(T_2,T_1)$ and
$\Ext^1_{A(\D)}(T_1^*,T_2^*)$, so $T_2$ is projective if and only if
$$
\Ext_{A(\D)}^1(T_1,T_2^*)=0
$$
for every Hilbert module $(\hil_1,T_1)$. A characterization of
projective Hilbert modules has long been sought. This result from
\cite{carlson1994} was mentioned in the introduction.

\begin{theorem}\label{thmunitarysplit}
If $T\in B(\hil)$ is similar to a unitary operator, then the Hilbert
module $(\hil,T)$ is projective.
\end{theorem}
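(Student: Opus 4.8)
The plan is to produce, for an arbitrary Hilbert module $(\hil_1,T_1)$ and an arbitrary class $[X]\in\Ext^1_{A(\D)}(T,T_1)$, a module morphism splitting the corresponding extension; in the concrete terms of Theorem~\ref{t-extchar} this amounts to showing that whenever $X:\hil\to\hil_1$ is such that $R:=\left(\begin{smallmatrix}T_1 & X\\ 0 & T\end{smallmatrix}\right)$ is polynomially bounded, then $X\in\mathcal J$. Since $\Ext$ is invariant under similarity, it suffices to treat the case where $T=U$ is an honest unitary on $\hil$; the only features of $U$ that enter are that it is invertible and that $\|U^{-n}\|=1$ for all $n\ge 0$. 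Put $\kil=\hil_1\oplus\hil$, let $\rho_0:\hil\to\kil$ be the isometric embedding $h\mapsto 0\oplus h$, and let $\pi:\kil\to\hil$ be the projection $a\oplus b\mapsto b$. Then $\pi\rho_0=I_\hil$ and, since the bottom row of $R$ is $(\,0\ \ U\,)$, we have $\pi R=U\pi$ and hence $\pi R^n=U^n\pi$ for every $n$; also $\|R^n\|\le C$ for all $n\ge 0$, where $C$ is a polynomial boundedness constant for $R$.

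The device I would use is the sequence of Ces\`aro averages of the ``twisted liftings'' $\Theta_n:=R^n\rho_0 U^{-n}:\hil\to\kil$, namely
\[
\sigma_N:=\frac1N\sum_{n=0}^{N-1}\Theta_n\qquad(N\ge 1).
\]
Three facts should be checked, all of them routine. First, $\|\sigma_N\|\le\frac1N\sum_{n=0}^{N-1}\|R^n\|\,\|U^{-n}\|\le C$, so the $\sigma_N$ stay in a fixed ball of $B(\hil,\kil)$. Second, $\pi\Theta_n=\pi R^n\rho_0 U^{-n}=U^n(\pi\rho_0)U^{-n}=I_\hil$, whence $\pi\sigma_N=I_\hil$ for every $N$. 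Third, from the identity $R\Theta_n-\Theta_n U=(\Theta_{n+1}-\Theta_n)U$ the sum telescopes:
\[
R\sigma_N-\sigma_N U=\frac1N\Big(\sum_{n=0}^{N-1}(\Theta_{n+1}-\Theta_n)\Big)U=\frac1N(\Theta_N-\Theta_0)U,
\]
so $\|R\sigma_N-\sigma_N U\|\le (C+1)/N\to 0$.

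To conclude, I would pass to a cluster point: bounded balls of $B(\hil,\kil)$ are compact in the weak-$*$ topology (which agrees with the weak operator topology there), so some subnet of $(\sigma_N)$ converges to an operator $\sigma\in B(\hil,\kil)$ — equivalently, one may define $\sigma$ by applying a fixed Banach limit to the bounded scalar nets $N\mapsto\langle\sigma_N h,k\rangle$. Since $Y\mapsto\pi Y$, $Y\mapsto RY$ and $Y\mapsto YU$ are all continuous in this topology, the limit inherits $\pi\sigma=I_\hil$ and $R\sigma=\sigma U$. The first relation forces $\sigma h=Lh\oplus h$ for a bounded $L:\hil\to\hil_1$, and then the second relation, read off the first coordinate, says $T_1 L+X=LU$, i.e. $X=LU-T_1 L=-(T_1 L-LU)\in\mathcal J$. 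Thus $[X]=0$, and as $(\hil_1,T_1)$ was arbitrary this shows $(\hil,T)$ is projective.

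There is no single hard step here; the point to get right is the limiting argument, since the averages $\sigma_N$ need not converge in norm, so the splitting $\sigma$ is obtained only weakly and one must verify that the two defining identities $\pi\sigma=I$ and $R\sigma=\sigma U$ survive passage to the limit — this is exactly why compactness (or a Banach limit) is invoked. The only place the hypotheses genuinely enter is the uniform estimate $\|\sigma_N\|\le C$: it rests on $\kil$ being a Hilbert module, so that its operator $R$ is power bounded, together with $U$ being invertible with $\sup_{n\ge0}\|U^{-n}\|<\infty$, which is precisely what similarity to a unitary supplies.
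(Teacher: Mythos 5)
Your argument is correct: after reducing by similarity to an honest unitary $U$, the polynomial boundedness of $R$ gives power boundedness, the Ces\`aro averages $\sigma_N$ of $R^n\rho_0U^{-n}$ are uniformly bounded, satisfy $\pi\sigma_N=I$ and $\|R\sigma_N-\sigma_N U\|\le (C+1)/N$, and a WOT cluster point (or Banach limit) produces a bounded $L$ with $X=LU-T_1L$, i.e. $[X]=0$ in the sense of Theorem \ref{t-extchar}, for every Hilbert module $(\hil_1,T_1)$. Note that the paper itself gives no proof of this statement but quotes it from \cite{carlson1994}; your averaging argument, which uses exactly the two features supplied by similarity to a unitary (invertibility and power boundedness of the inverses), is essentially the standard proof of that result, so nothing further is needed.
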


If $\E$ is a separable Hilbert space, we denote by $L^2(\E)$ the
Hilbert space of weakly measurable square integrable functions
$f:\T\to \E$. The Hardy space $H^2(\E)$ is the closed subspace of
$L^2(\E)$ consisting of functions with vanishing negative Fourier
coefficients. Elements of $H^2(\E)$ can also be viewed as
$\E$-valued functions holomorphic on $\D$ with square summable
Taylor coefficients. We embed $\E$ in $H^2(\E)$ as the subspace
consisting of constant functions, and we denote by $P_{\E}$ the
orthogonal projection of $H^2(\E)$ onto $\E$. When $\E=\C$, we
simply write $H^2(\E)=H^2$ and $L^2(\E)=L^2$. The \emph{unilateral
shift} operator
$$
S_{\E}:H^2(\E)\to H^2(\E)
$$
is defined as
$$
(S_{\E}f)(z)=zf(z)
$$
for every $f\in H^2(\E)$. Recall that the \emph{multiplicity} of
$S_{\E}$ is the dimension of $\E$. Note also that since $S_{\E}$ is
isometric, it gives rise to a Hilbert module structure on $H^2(\E)$.

We now give a rather precise description of the group
$\Ext_{A(\D)}^1(T,S)$ where $(\hil,T)$ is any Hilbert module. This
result was originally proved in \cite{carlson1995} (Proposition
3.1.1 and Theorem 3.2.1) for the shift of multiplicity one. However,
a quick glance at the proof of Proposition 3.1.1 shows that it can
be adapted to any multiplicity, while the more general version of
Theorem 3.2.1 can be found in Lemma 2.1 of \cite{carlson1997}.

\begin{theorem}\label{t-extscharact}
Let $(\hil,T$) be a Hilbert module. Then, an operator $X:\hil\to \E$
gives rise to an element $[X]\in \Ext_{A(\D)}^1(T,S_{\E})$ if and
only if there exists a constant $c>0$ such that
$$
\sum_{n=0}^\infty \|XT^n h\|^2\leq c\|h\|^2
$$
for every $h\in \hil$. Moreover, for every $[X]\in
\Ext_{A(\D)}^1(T,S_{\E})$ there exists an operator $Y:\hil\to \E$
with the property that $[X]=[Y]$.
\end{theorem}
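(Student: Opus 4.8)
The plan is to unwind the statement through Theorem~\ref{t-extchar}: for $X\colon\hil\to\E$, saying that $X$ "gives rise to an element $[X]\in\Ext_{A(\D)}^1(T,S_\E)$" means exactly that, after viewing $X$ as an operator into $H^2(\E)$ via the embedding of $\E$ as the constants, the operator $R=\left(\begin{smallmatrix}S_\E & X\\ 0 & T\end{smallmatrix}\right)$ is polynomially bounded, and $[X]$ is then its class in $\A/\mathcal{J}$. By the computation carried out in the proof of Lemma~\ref{l-finitebounded}, for $\phi(z)=\sum_k a_k z^k$ the $(1,2)$-entry of $\phi(R)$ is $\delta_X(\phi)=\sum_{k\ge1}a_k\sum_{j=0}^{k-1}S_\E^j X T^{k-1-j}$. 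The first thing I would record is that, since $X$ has values in the constant functions, $S_\E^j X h=z^j X h$, so collecting powers of $z$ shows that the $\ell$-th Taylor coefficient of $\delta_X(\phi)h$ equals $X(D^{\ell+1}\phi)(T)h$, where $D$ is the operator from the remark preceding Lemma~\ref{l-finitebounded}; in particular $\|\delta_X(\phi)h\|^2=\sum_{\ell\ge0}\|X(D^{\ell+1}\phi)(T)h\|^2$.

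Granting this, necessity of the estimate is immediate: taking $\phi(z)=z^N$ one has $D^{\ell+1}z^N=z^{N-1-\ell}$ for $\ell<N$ and $0$ otherwise, so $\|\delta_X(z^N)h\|^2=\sum_{m=0}^{N-1}\|X T^m h\|^2$; since $\|\delta_X(z^N)h\|\le\|R^N\|\,\|h\|\le C\|h\|$ for a polynomial bound $C$ of $R$, letting $N\to\infty$ gives $\sum_{m\ge0}\|X T^m h\|^2\le C^2\|h\|^2$. For the converse I would start from the estimate and let $V\colon\hil\to H^2(\E)$ be the operator sending $h$ to the function with $n$-th Taylor coefficient $X T^n h$; then $\|V\|^2\le c$, $P_\E V=X$, and $VT=S_\E^* V$, hence $X T^m=P_\E S_\E^{*m} V$. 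Feeding this into the formula for $\delta_X(\phi)$ exhibits $\delta_X(\phi)=\Gamma_\phi V$, where $\Gamma_\phi$ is the operator on $H^2(\E)$ with Hankel matrix $(a_{i+j+1})_{i,j\ge0}$, that is, the Hankel operator whose symbol is a reflection of $\phi-\phi(0)$; the elementary norm bound for Hankel operators then gives $\|\Gamma_\phi\|\le\|\phi-\phi(0)\|_\infty\le 2\|\phi\|_\infty$. Thus $\|\delta_X(\phi)\|\le 2\sqrt{c}\,\|\phi\|_\infty$, and since $\|\phi(S_\E)\|=\|\phi\|_\infty$ while $\|\phi(T)\|\le C_T\|\phi\|_\infty$ (with $C_T$ a polynomial bound of $T$), $R$ is polynomially bounded. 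This settles the equivalence.

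For the ``moreover'' part I am given $[X]\in\Ext_{A(\D)}^1(T,S_\E)$ represented by some $X\colon\hil\to H^2(\E)$ with $R$ polynomially bounded of constant $C$, and I must produce $Y\colon\hil\to\E$ with $[Y]=[X]$. The natural candidate is $Y=\sum_{p\ge0}P_\E S_\E^{*p}X T^p$, but this series need not converge, which is the crux of the matter. What works is the following. Setting $U_N=\sum_{p=0}^N S_\E^{*p}X T^p$, a direct computation gives $U_N=S_\E^{*N}\delta_X(z^{N+1})$, so $\|U_N\|=\|S_\E^{*N}\delta_X(z^{N+1})\|\le C$ for every $N$. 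For $t\in[0,1)$ form the Abel means $U^{(t)}=\sum_{p\ge0}t^p S_\E^{*p}X T^p$ (a norm-convergent series since $T$ is power bounded); summation by parts gives $U^{(t)}=(1-t)\sum_{N\ge0}t^N U_N$, whence $\|U^{(t)}\|\le C$ uniformly in $t$, while a rearrangement of the defining series yields the identity $U^{(t)}=X+t\,S_\E^* U^{(t)}T$. Now pick a net $t_\alpha\uparrow 1$ along which $U^{(t_\alpha)}$ converges in the weak operator topology — possible because the ball of radius $C$ in $B(\hil,H^2(\E))$ is weak-operator compact — call the limit $U$; then $\|U\|\le C$ and, passing to the limit in the identity, $U=X+S_\E^* U T$. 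Setting $Y:=P_\E U$ and $L:=S_\E^* U$, both bounded, one gets $S_\E L-LT=(I-P_\E)U-(U-X)=X-Y$, so $X-Y\in\mathcal{J}$ and $[X]=[Y]$; and $Y$ then automatically satisfies the square-summability estimate by the equivalence just proved.

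The step I expect to be the main obstacle is precisely the construction of $Y$ in the ``moreover'' part: the obvious formula for $Y$ is a series with no evident convergence, and the way around this is to pass to Abel means, where polynomial boundedness of $R$ (through $\|U_N\|=\|S_\E^{*N}\delta_X(z^{N+1})\|\le C$) buys a uniform bound, and then to extract a weak-operator limit. Verifying the identity $U^{(t)}=X+t\,S_\E^* U^{(t)}T$ and its stability under this limit is where the care is needed.
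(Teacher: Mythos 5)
Your proof is correct. There is nothing in the paper to compare it with directly: Theorem \ref{t-extscharact} is not proved here but quoted from \cite{carlson1995} (Proposition 3.1.1 and Theorem 3.2.1) and, for arbitrary multiplicity, \cite{carlson1997} (Lemma 2.1), so what you have produced is a self-contained substitute for that citation, in the same Hankel-flavored spirit as the sources (compare the discussion of Corollary 3.1.6 of \cite{carlson1995} after Proposition \ref{p-zss*}). The individual steps all check out: since $X$ takes values in the constants, the $\ell$-th Taylor coefficient of $\delta_X(\phi)h$ is indeed $X(D^{\ell+1}\phi)(T)h$, and testing on $\phi=z^N$ gives necessity with constant $C^2$; for sufficiency, the factorization $\delta_X(\phi)=\Gamma_\phi V$ with $\Gamma_\phi$ the Hankel matrix $(a_{i+j+1})_{i,j\geq 0}$ tensored with $I_{\E}$ is right, and the bound $\|\Gamma_\phi\|\leq\|\phi-\phi(0)\|_\infty\leq 2\|\phi\|_\infty$ is the elementary half of the Nehari theorem, which survives the tensoring. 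In the ``moreover'' part, the identity $U_N=S_{\E}^{*N}\delta_X(z^{N+1})$, hence $\|U_N\|\leq C$, the Abel-mean identities $U^{(t)}=(1-t)\sum_{N\geq 0}t^NU_N$ and $U^{(t)}=X+tS_{\E}^*U^{(t)}T$, and the passage to a weak-operator cluster point $U$ (pre- and post-composition by fixed operators being WOT-continuous) are all valid, and then $Y=P_{\E}U$, $L=S_{\E}^*U$ give $X-Y=S_{\E}L-LT$ via $S_{\E}S_{\E}^*=I-P_{\E}$. The only point worth making explicit at the end is that $Y$ belongs to the class $\A$ of Theorem \ref{t-extchar} -- because $\mathcal{J}\subset\A$ (conjugate by $\bigl(\begin{smallmatrix}I&L\\0&I\end{smallmatrix}\bigr)$) and $\A$ is a linear space -- so the equivalence you proved in the first half really does apply to $Y$ and yields the square-summability estimate for it.
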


We bring the reader's attention to the fact that the group
$\Ext_{A(\D)}^1(T,S_{\E})$ is really of a ``scalar" nature: it
consists of elements $[X]$ where the operator $X:\hil\to H^2(\E)$
has range contained in the constant functions $\E$. We use Theorem
\ref{t-extscharact} throughout as a basis for comparison with our
own results about $\Ext_{A(\D)}^1(T,S_{\E}^*)$.

Finally, we end this section with a theorem that identifies the
projective modules in the smaller category of \emph{contractive}
Hilbert modules (see \cite{ferguson1997}).

\begin{theorem}\label{t-isomproj}
Let $T\in B(\hil)$ be similar to a contraction. The following
statements are equivalent:
\begin{enumerate}
\item[\rm{(i)}] $\Ext_{A(\D)}^1(T,S_{\E})=0$ for some separable
Hilbert space $\E$
\item[\rm{(ii)}] the Hilbert module $(\hil,T)$ is projective in
the category of Hilbert modules similar to a contractive one
\item[\rm{(iii)}] the operator $T$ is similar to an isometry.
\end{enumerate}
\end{theorem}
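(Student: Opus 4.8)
The plan is to run the cycle of implications $\mathrm{(i)}\Rightarrow\mathrm{(iii)}\Rightarrow\mathrm{(ii)}\Rightarrow\mathrm{(i)}$, using repeatedly that the groups $\Ext_{A(\D)}^1(\cdot,\cdot)$ are invariant under similarity, so that $T$ may be replaced at will by any operator similar to it. Two of the three implications are short. For $\mathrm{(ii)}\Rightarrow\mathrm{(i)}$: the module $(H^2(\E),S_\E)$ is isometric, hence contractive, hence an object of the category appearing in (ii), so projectivity immediately forces $\Ext_{A(\D)}^1(T,S_\E)=0$. For $\mathrm{(iii)}\Rightarrow\mathrm{(ii)}$: I would first replace $T$ by an isometry similar to it; isometric modules over $A(\D)$ are projective in the category of \emph{contractive} modules by \cite{carlson1994}, and if $(\kil,R)$ is any module with $R$ similar to a contraction $R'$ then $\Ext_{A(\D)}^1(T,R)\cong\Ext_{A(\D)}^1(T,R')=0$; hence $(\hil,T)$ is projective in the larger category of (ii).

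The substance is $\mathrm{(i)}\Rightarrow\mathrm{(iii)}$, and here is the route I would take. After replacing $T$ by a contraction, set $D_T=(I-T^{*}T)^{1/2}$ and $\mathfrak{D}_T=\ol{\ran D_T}$, and fix for the moment an isometric embedding $\iota\colon\mathfrak{D}_T\hookrightarrow\E$ (so I am provisionally assuming $\dim\mathfrak{D}_T\le\dim\E$). On $\kil=H^2(\E)\oplus\hil$ consider the operator $V=\left(\begin{smallmatrix}S_\E & X_0\\ 0 & T\end{smallmatrix}\right)$, where $X_0\colon\hil\to\E\subseteq H^2(\E)$ sends $h$ to the constant function with value $\iota(D_T h)$; this is essentially Sch\"afer's isometric dilation of $T$, inflated to multiplicity $\dim\E$, and a one-line computation using $D_T^{2}+T^{*}T=I$ and $\iota^{*}\iota=I$ gives $V^{*}V=I$, so $V$ is an isometry and in particular polynomially bounded. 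Moreover $H^2(\E)\oplus 0$ is a $V$-invariant submodule carrying the action $S_\E$, and the quotient module is $(\hil,T)$, so by Theorem \ref{t-extchar} the associated short exact sequence $0\to H^2(\E)\to\kil\to\hil\to 0$ represents exactly the class $[X_0]\in\Ext_{A(\D)}^1(T,S_\E)$.

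Now I would invoke the hypothesis: $[X_0]=0$, so $X_0=S_\E L-LT$ for some bounded $L\colon\hil\to H^2(\E)$. Then $\sigma=\left(\begin{smallmatrix}-L\\ I\end{smallmatrix}\right)\colon\hil\to\kil$ satisfies $V\sigma=\sigma T$ and $P_\hil\sigma=I$, i.e. it is a module map splitting the sequence. Since $\|\sigma h\|\ge\|h\|$, the subspace $\sigma(\hil)$ is closed, and it is $V$-invariant because $V\sigma(\hil)=\sigma(T\hil)\subseteq\sigma(\hil)$; consequently $V|_{\sigma(\hil)}$ is an isometry, being the restriction of the isometry $V$ to an invariant subspace. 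Finally $\sigma$ is a bounded bijection of $\hil$ onto $\sigma(\hil)$ with bounded inverse $P_\hil|_{\sigma(\hil)}$, and it intertwines $T$ with $V|_{\sigma(\hil)}$; hence $T$ is similar to an isometry, which is (iii).

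The step I expect to be the main obstacle is reconciling the above with the literal hypothesis, which only assumes $\Ext_{A(\D)}^1(T,S_\E)=0$ for \emph{some} separable $\E$, whereas the construction wants $\E$ large enough to contain a copy of $\mathfrak{D}_T$. When $\mathfrak{D}_T$ is finite dimensional this is harmless, since a direct summand of $S_\E$ is again a unilateral shift and $\Ext_{A(\D)}^1(T,S_{\C^{k}})$ vanishes if and only if $\Ext_{A(\D)}^1(T,S_{\C})$ does; when $\mathfrak{D}_T$ is infinite dimensional one wants $\E$ infinite dimensional as well. The genuinely delicate case is an infinite defect space against a finite dimensional $\E$, and that is where I would expect to need either a separate argument (showing the group is then automatically nonzero unless $T$ is already similar to an isometry) or a reformulation of the hypothesis that builds in the correct multiplicity. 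By contrast the conceptual core is clean: the canonical isometric dilation of a contraction is, read through Theorem \ref{t-extchar}, nothing but an extension of $T$ by a unilateral shift, so the vanishing of that extension class exhibits $\hil$ as a similar copy of an invariant subspace of an isometry.
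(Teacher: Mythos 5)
Your proposal cannot be checked against an internal argument, because the paper does not prove Theorem \ref{t-isomproj}: it is quoted from \cite{ferguson1997}. Judged on its own merits, the splitting mechanism you use is correct and standard: $V$ is indeed an isometry of Sch\"afer type, the class $[X_0]$ lies in $\Ext^1_{A(\D)}(T,S_{\E})$ by Theorem \ref{t-extchar}, and a trivializing $L$ gives the cross-section $\sigma$, whose range is a closed $V$-invariant subspace on which $V$ restricts to an isometry similar to $T$. But the obstacle you flag at the end is not a peripheral technicality -- it is essentially the whole content of the cited theorem. Hypothesis (i) gives vanishing for \emph{some} $\E$, and vanishing of $\Ext^1_{A(\D)}(T,S_{\E})$ passes to direct summands of $S_{\E}$ (i.e.\ to \emph{smaller} multiplicity), never automatically to larger multiplicity; so the statement your construction actually needs, namely $\Ext^1_{A(\D)}(T,S_{\mathcal{D}_T})=0$ with $\mathcal{D}_T=\ol{\ran(I-T^*T)^{1/2}}$ possibly infinite dimensional, does not follow when the witnessing space is, say, $\E=\C$. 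Nor can this be patched componentwise: applying the multiplicity-one hypothesis to $X_e=\langle D_T\,\cdot\,,e\rangle$ for $e$ in an orthonormal basis of $\mathcal{D}_T$ yields solutions $L_e$ for which even an open-mapping/uniformity argument gives only $\|L_e\|\leq M$, while assembling a bounded $\Lambda:\hil\to H^2(\mathcal{D}_T)$ needs the square-summability $\sum_e\|L_e h\|^2\leq C\|h\|^2$, which is not available. So your argument proves (i)$\Rightarrow$(iii) only when $\dim\mathcal{D}_T\leq\dim\E$; the remaining case is precisely Ferguson's theorem and is missing.

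There is a second, unflagged gap in ``(ii)$\Rightarrow$(i) is immediate.'' Projectivity in the category of modules similar to contractive ones only splits extensions whose \emph{middle} term lies in that category, whereas by Theorem \ref{t-extchar} a class in $\Ext^1_{A(\D)}(T,S_{\E})$ is represented by a triangular operator $\left(\begin{smallmatrix} S_{\E} & X\\ 0 & T\end{smallmatrix}\right)$ that is merely polynomially bounded and is not known to be similar to a contraction (Pisier-type middle terms are exactly the warning sign here). The safe use of (ii) is against your own dilation extension, whose middle term $V$ is isometric: that yields (ii)$\Rightarrow$(iii) cleanly. But then (iii)$\Rightarrow$(i), i.e.\ vanishing of the \emph{full} extension group against an isometry rather than the contractive-category statement of \cite{carlson1994}, again requires a genuine argument and is part of what \cite{ferguson1997} establishes. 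In summary, the arrows you can rigorously extract are (ii)$\Rightarrow$(iii) (your dilation-splitting) and (iii)$\Rightarrow$(ii) (\cite{carlson1994} plus similarity invariance of $\Ext$); both arrows into (i), and (i)$\Rightarrow$(iii) when the given multiplicity is smaller than the defect dimension, are where the substance of the theorem lies and remain unproved in your write-up.
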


\section{A criterion for the projectivity of isometric Hilbert modules}

Throughout the paper we will assume that $\E$ is a separable Hilbert
space. The first result of this section is elementary. We record it
here for convenience.

\begin{lemma}\label{l-commutator}
Let $X,\Lambda:\hil\to H^2(\E)$ be bounded operators defined as
$$
Xh=\sum_{n=0}^\infty  z^n X_n h
$$
and
$$
\Lambda h=\sum_{n=0}^\infty  z^n L_n h
$$
for every $h\in \hil$, where $L_n,X_n\in B(\hil,\E)$ for every
$n\geq 0$. Then, $X=S_{\E}^*\Lambda-\Lambda T$ if and only if
$X_n=L_{n+1}-L_n T$ for every $n\geq 0$.
\end{lemma}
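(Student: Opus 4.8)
The plan is to compare Taylor coefficients on both sides of the asserted identity, using the fact that two elements of $H^2(\E)$ coincide precisely when all their Taylor coefficients agree. First I would recall the description of the adjoint of the shift on Taylor coefficients: if $g=\sum_{n=0}^\infty z^n e_n\in H^2(\E)$ with $e_n\in\E$, then $S_{\E}^* g=\sum_{n=0}^\infty z^n e_{n+1}$ (this is just the observation that $S_{\E}$ is multiplication by $z$, so its adjoint deletes the constant term and divides by $z$). Applying this with $g=\Lambda h=\sum_{n=0}^\infty z^n L_n h$ gives $S_{\E}^*\Lambda h=\sum_{n=0}^\infty z^n L_{n+1}h$ for every $h\in\hil$.

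Next I would compute $\Lambda T h$. Since $Th$ is again a vector of $\hil$, the defining formula for $\Lambda$ evaluated at $Th$ yields $\Lambda T h=\sum_{n=0}^\infty z^n L_n T h$. Subtracting, we obtain $(S_{\E}^*\Lambda-\Lambda T)h=\sum_{n=0}^\infty z^n (L_{n+1}-L_n T)h$, whereas by hypothesis $Xh=\sum_{n=0}^\infty z^n X_n h$.

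Finally, since the $n$-th Taylor coefficient of an element of $H^2(\E)$ is obtained by applying the (bounded) projection onto the appropriate copy of $\E$ composed with the backward shift iterated $n$ times, equality of two elements of $H^2(\E)$ is equivalent to equality of all their Taylor coefficients. Hence $X=S_{\E}^*\Lambda-\Lambda T$ holds if and only if $X_n h=(L_{n+1}-L_n T)h$ for every $h\in\hil$ and every $n\geq 0$, that is, if and only if $X_n=L_{n+1}-L_n T$ for every $n\geq 0$. There is no genuine obstacle in this argument; the only two points deserving an explicit word are the backward-shift description of $S_{\E}^*$ and the coefficientwise criterion for equality in $H^2(\E)$, both of which are standard.
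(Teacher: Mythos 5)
Your argument is correct: computing $S_{\E}^*\Lambda h$ by shifting Taylor coefficients, computing $\Lambda Th$ by substituting $Th$, and comparing coefficients is precisely the elementary verification the paper has in mind (it states the lemma without proof, calling it elementary). Nothing is missing.
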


The following observation lies at the base of our investigations.
\begin{lemma}\label{l-innerchar}
Let $(\hil, T)$ be a Hilbert module. Let $X:\hil\to H^2(\E)$ be a
bounded operator defined as
$$
Xh=\sum_{n=0}^\infty z^n X_n h
$$
for every $h\in \hil$, where $X_n\in B(\hil,\E)$ for every $n\geq
0$. Let $c>0$ and $L\in B(\hil,\E)$. Then, there exists a bounded
operator $\Lambda:\hil\to H^2(\E)$ such that
$$
X=S_{\E}^*\Lambda-\Lambda T,
$$
$$
P_{\E}\Lambda=-L
$$
and
$$
\|\Lambda h\|^2\leq \|L h\|^2+ c\|h\|^2
$$
for every $h\in \hil$ if and only if
$$
\sum_{n=1}^\infty
\left\|\left(\sum_{j=0}^{n-1}X_{n-1-j}T^j-LT^n\right)h
\right\|^2\leq c\|h\|^2
$$
for every $h\in \hil$.
\end{lemma}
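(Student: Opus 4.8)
The plan is to reduce the statement to a bookkeeping computation on the Taylor coefficients of $\Lambda$, with Lemma~\ref{l-commutator} serving as the bridge. Any bounded $\Lambda\colon\hil\to H^2(\E)$ can be written as $\Lambda h=\sum_{n=0}^\infty z^n L_n h$ where $L_n=P_\E S_\E^{*n}\Lambda\in B(\hil,\E)$. By Lemma~\ref{l-commutator}, the relation $X=S_\E^*\Lambda-\Lambda T$ is then equivalent to the recursion $L_{n+1}=X_n+L_n T$ for all $n\geq 0$, while the normalization $P_\E\Lambda=-L$ is precisely $L_0=-L$. First I would solve this recursion by a one-line induction to obtain
$$
L_n=\sum_{j=0}^{n-1}X_{n-1-j}T^j-LT^n\qquad(n\geq 1),
$$
so that $\Lambda$, if it exists, is completely and explicitly determined by $X$ and $L$. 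The content of the lemma is thus the assertion that this formally prescribed $\Lambda$ genuinely defines a bounded Hilbert-space operator satisfying the required norm bound exactly under the stated summability hypothesis.

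For the forward direction I would start from a $\Lambda$ with the three listed properties, note that its coefficient operators are forced to be the $L_n$ above, and invoke Parseval's identity in $H^2(\E)$:
$$
\|\Lambda h\|^2=\sum_{n=0}^\infty\|L_n h\|^2=\|Lh\|^2+\sum_{n=1}^\infty\left\|\left(\sum_{j=0}^{n-1}X_{n-1-j}T^j-LT^n\right)h\right\|^2.
$$
Subtracting $\|Lh\|^2$ from the hypothesis $\|\Lambda h\|^2\leq\|Lh\|^2+c\|h\|^2$ then yields exactly the desired inequality.

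For the converse I would run the construction in reverse: set $L_0=-L$, define $L_n$ for $n\geq 1$ by the displayed formula (each a finite sum of compositions of bounded operators, hence in $B(\hil,\E)$), and put $\Lambda h=\sum_{n=0}^\infty z^n L_n h$. The Parseval identity above, combined with the summability hypothesis, shows $\|\Lambda h\|^2\leq\|Lh\|^2+c\|h\|^2\leq(\|L\|^2+c)\|h\|^2$, so $\Lambda h\in H^2(\E)$ for every $h$ and $\Lambda$ is a bounded operator; linearity is immediate. By construction $P_\E\Lambda=-L$ and the recursion $L_{n+1}=X_n+L_n T$ holds, so Lemma~\ref{l-commutator} gives $X=S_\E^*\Lambda-\Lambda T$, and the norm bound is once more the Parseval computation.

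The only point requiring a little care is the well-definedness and boundedness of $\Lambda$ in the converse direction: one must check that the pointwise-defined $\Lambda h$ actually lies in $H^2(\E)$ and that $h\mapsto\Lambda h$ is bounded. But this is an immediate consequence of the Parseval identity once the summability hypothesis is available, so I do not expect a genuine obstacle; the remaining steps are routine manipulations of power series together with the already-established Lemma~\ref{l-commutator}.
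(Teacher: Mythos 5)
Your proposal is correct and follows essentially the same route as the paper: reduce via Lemma \ref{l-commutator} to the coefficient recursion $L_{n+1}=X_n+L_nT$ with $L_0=-L$, solve it explicitly for $L_n$, and use the Parseval identity $\|\Lambda h\|^2=\sum_{n\geq 0}\|L_nh\|^2$ in both directions. The only cosmetic difference is that you solve the recursion once up front, whereas the paper verifies it directly in one direction and telescopes in the other; the substance is identical.
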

\begin{proof}
Assume first that
$$ \sum_{n=1}^\infty
\left\|\left(\sum_{j=0}^{n-1}X_{n-1-j}T^j-LT^n\right)h
\right\|^2\leq c\|h\|^2
$$
for every $h\in \hil$. Set $L_0=-L$ and
$$
L_n=\sum_{j=0}^{n-1}X_{n-1-j}T^{j}-LT^n
$$
for $n\geq 1$. Notice now that we have
$$
L_{0}T=-LT=L_1-X_0
$$
and
\begin{align*}
L_n T&=\sum_{j=0}^{n-1}X_{n-1-j}T^{j+1}-LT^{n+1}\\
&=\sum_{j=1}^{n}X_{n-j}T^j-L T^{n+1}\\
&=\sum_{j=0}^{n}X_{n-j}T^j-X_n-L T^{n+1}\\
&=L_{n+1}-X_n
\end{align*}
for $n\geq 1$, which shows that for every $n\geq 0$ we have
$$
X_n=L_{n+1} -L_{n}T.
$$
Define
$$
\Lambda h=\sum_{n=0}^\infty z^n L_n h
$$
for every $h\in \hil$. By Lemma \ref{l-commutator}, we see that
$$
S_{\E}^*\Lambda-\Lambda T=X.
$$
Moreover, by assumption we have for every $h\in \hil$ that
\begin{align*}
\|\Lambda h\|^2&=\sum_{n=0}^\infty \|L_n h \|^2\\
&=\|L h\|^2+\sum_{n=1}^\infty \left\|\left( \sum_{j=0}^{n-1}X_{n-1-j}T^j-LT^n\right)h\right\|^2\\
&\leq \|Lh \|^2 +c\|h\|^2.
\end{align*}

Conversely, assume that there exists a bounded linear operator
$\Lambda :\hil \to H^2(\E)$ defined as
$$
\Lambda h=\sum_{n=0}^\infty z^n L_n h
$$
for every $h\in \hil$ with the property that
$$
X=S_{\E}^*\Lambda-\Lambda T,
$$
$$
L_0=-L
$$
and
$$
\|\Lambda h\|^2\leq \|Lh\|^2+c\|h\|^2
$$
for every $h\in \hil.$ Then, by Lemma \ref{l-commutator} we have that
$$
X_n=L_{n+1}-L_n T
$$
for every $n\geq 0$, so that we find
\begin{align*}
\sum_{j=0}^{n}X_{n-j}T^j&=\sum_{j=0}^{n}(L_{n-j+1}-L_{n-j}T)T^j\\
&=\sum_{j=0}^{n}L_{n-j+1}T^j-\sum_{j=1}^{{n+1}}L_{n-j+1}T^j\\
&=L_{n+1}-L_{0}T^{n+1}\\
&=L_{n+1}+LT^{n+1}
\end{align*}
for every $n\geq 0$. Consequently,
$$
\sum_{n=1}^\infty \left\|\left(L
T^n-\sum_{j=0}^{n-1}X_{n-1-j}T^j\right)h \right\|^2
=\sum_{n=1}^\infty \|L_n h\|^2 = \|\Lambda h\|^2-\|L h\|^2\leq
c\|h\|^2
$$
and the proof is complete.
\end{proof}
As suggested by this result, we make the following definition.
\begin{definition}\label{d-z}
Let $(\hil,T)$ be a Hilbert module and let $\E$ be a separable Hilbert space. We denote by $Z_{\E}(T)$ the subspace of $B(\hil,\E)$ consisting of the operators $X\in
B(\hil,\E)$ with the property that there exists a constant $c_X>0$
such that
$$
\sum_{n=0}^\infty \|XT^n h \|^2\leq c_X\|h\|^2
$$
for every $h\in \hil$.
\end{definition}
By Theorem \ref{t-extscharact}, we see that
the set  $Z_{\E}(T)$ consists exactly of those operators $X:\hil\to
\E$ which give rise to an element $[X]\in \Ext_{A(\D)}^1(T,S_{\E})$.

We now give a criterion for an element $[X]$ of
$\Ext_{A(\D)}^1(T,S_{\E}^*)$ to be trivial when $X$ is particularly
simple, namely of the type considered  in Lemma
\ref{l-finitebounded}.

\begin{theorem}\label{t-innercrit}
Let $(\hil, T)$ be a Hilbert module. Let $X:\hil\to H^2(\E)$ be
defined as
$$
Xh=\sum_{n=0}^\infty  z^n X_n h
$$
for every $h\in \hil$, where $X_n\in B(\hil,\E)$ for every $n\geq
0$. Assume that $S_{\E}^{*N}XT^N=0$ for some integer $N\geq 0$.
Then, the element $[X]$ of $\Ext_{A(\D)}^1(T,S_{\E}^*)$ is trivial
if and only if
$$
\sum_{j=0}^{N-1}X_{j}T^{N-1-j}\in B(\hil,\E)T^{N}+Z_{\E}(T).
$$
\end{theorem}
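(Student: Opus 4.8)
The plan is to convert triviality of $[X]$ into a concrete summability statement via Lemma \ref{l-innerchar}, and then use the hypothesis $S_{\E}^{*N}XT^N=0$ to collapse that statement to the asserted membership. First, note that $[X]$ really does define an element of $\Ext_{A(\D)}^1(T,S_{\E}^*)$: since $S_{\E}^*$ is a contraction (hence polynomially bounded) and $S_{\E}^{*N}XT^N=0$, Lemma \ref{l-finitebounded} shows that $\left(\begin{smallmatrix}S_{\E}^* & X\\ 0 & T\end{smallmatrix}\right)$ is polynomially bounded. By Theorem \ref{t-extchar}, $[X]$ is trivial exactly when $X=S_{\E}^*\Lambda-\Lambda T$ for some bounded $\Lambda:\hil\to H^2(\E)$. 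Given such a $\Lambda$, put $L:=-P_{\E}\Lambda\in B(\hil,\E)$; then $P_{\E}\Lambda=-L$ and $\|\Lambda h\|^2\le\|Lh\|^2+\|\Lambda\|^2\|h\|^2$, so the hypotheses of the ``only if'' implication of Lemma \ref{l-innerchar} hold, while its ``if'' implication conversely manufactures a bounded $\Lambda$ out of a suitable $L$. Hence $[X]$ is trivial if and only if there exist $L\in B(\hil,\E)$ and $c>0$ with
$$
\sum_{n=1}^{\infty}\left\|\left(\sum_{j=0}^{n-1}X_{n-1-j}T^j-LT^n\right)h\right\|^2\le c\|h\|^2\qquad(h\in\hil).
$$
Write $S_n:=\sum_{j=0}^{n-1}X_{n-1-j}T^j$, so that after reindexing $S_N=\sum_{j=0}^{N-1}X_jT^{N-1-j}$ is exactly the operator appearing in the statement.

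The crux is to exploit the hypothesis. Written out, $S_{\E}^{*N}XT^N=0$ is equivalent to $X_mT^N=0$ for all $m\ge N$, hence $X_mT^j=0$ whenever $m\ge N$ and $j\ge N$. Splitting the sum defining $S_n$ according to whether $n-1-j\ge N$ and deleting the vanishing terms yields, for every $n\ge 2N$,
$$
S_n=\underbrace{\sum_{j=0}^{N-1}X_{n-1-j}T^j}_{=:G_n}+S_NT^{n-N}.
$$
The essential point is that the remainder $G_n$ is automatically square summable: by the $N$-term Cauchy--Schwarz inequality together with the identity $\sum_{m\ge0}\|X_mg\|^2=\|Xg\|^2\le\|X\|^2\|g\|^2$ applied with $g=T^jh$, one obtains $\sum_{n\ge2N}\|G_nh\|^2\le N\|X\|^2\big(\sum_{j=0}^{N-1}\|T\|^{2j}\big)\|h\|^2$. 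I expect this decomposition to be the main obstacle, since it is the only place where the hypothesis is genuinely used; everything else is bookkeeping.

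It remains to assemble the pieces. For the finitely many indices $1\le n\le 2N-1$ each $S_n$ is a finite sum of bounded operators, so $\sum_{n=1}^{2N-1}\|(S_n-LT^n)h\|^2\le c_1\|h\|^2$ for a constant $c_1$ depending only on $L,X,T,N$; these terms play no role in the summability question. Since moreover $\sum_{n\ge2N}\|G_nh\|^2\le C\|h\|^2$, the displayed inequality holds for some $c>0$ if and only if $\sum_{n\ge2N}\|(S_NT^{n-N}-LT^n)h\|^2=\sum_{k\ge N}\|(S_N-LT^N)T^kh\|^2$ is dominated by a multiple of $\|h\|^2$; absorbing the finitely many terms $k=0,\dots,N-1$ (again a bounded perturbation) this says precisely that $S_N-LT^N\in Z_{\E}(T)$. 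Quantifying over $L\in B(\hil,\E)$, triviality of $[X]$ is thus equivalent to $S_N\in B(\hil,\E)T^N+Z_{\E}(T)$, that is, $\sum_{j=0}^{N-1}X_jT^{N-1-j}\in B(\hil,\E)T^N+Z_{\E}(T)$, as claimed.
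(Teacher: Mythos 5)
Your proposal is correct and follows essentially the same route as the paper's proof: reduce triviality of $[X]$ via Lemma \ref{l-innerchar} to a square-summability condition, split the sum at $n\geq 2N$ using $X_mT^j=0$ for $m,j\geq N$, observe that the leftover piece $\sum_{j=0}^{N-1}X_{n-1-j}T^j$ is automatically square-summable since $\sum_m\|X_mg\|^2=\|Xg\|^2$, and identify what remains with membership of $\sum_{j=0}^{N-1}X_jT^{N-1-j}-(-L)T^N$ in $Z_{\E}(T)$. The only differences are cosmetic (your explicit handling of the finitely many small-index terms and the passage through Theorem \ref{t-extchar}, and using $\|T\|^{2j}$ where the paper uses the constant $C_T$), so no further comment is needed.
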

\begin{proof}
By Lemma \ref{l-innerchar}, we find that $[X]=0$ if and only if for
some $L\in B(\hil,\E)$ we have
$$
\sum_{n=1}^\infty \left\|\left( LT^n
+\sum_{j=0}^{n-1}X_{n-1-j}T^j\right)h\right\|^2\leq c\|h\|^2
$$
for some constant $c>0$ and every $h\in \hil$. Now, the condition
$S_{\E}^{*N}XT^N=0$ implies that $X_{n}T^m=0$ if $n\geq N$ and
$m\geq N$. Thus, $X_{n-1-j}T^j\neq 0$ only if $j\leq N-1$ or $j\geq
n-N$. Therefore, for $n\geq 2N$, we can write
\begin{align*}
\sum_{j=0}^{n-1}X_{n-1-j}T^j &=\sum_{j=0}^{N-1}X_{n-1-j}T^j+\sum_{j=n-N}^{n-1}X_{n-1-j}T^j\\
&=\sum_{j=0}^{N-1}(X_{n-1-j}T^j+X_{j}T^{n-1-j}).
\end{align*}
Notice now that
\begin{align*}
\sum_{n=2N}^\infty \left\|\left(\sum_{j=0}^{N-1}X_{n-1-j}T^j\right)h\right\|^2
&\leq N\sum_{n=2N}^\infty \sum_{j=0}^{N-1}\left\|X_{n-1-j}T^j h\right\|^2\\
&\leq N \sum_{j=0}^{N-1} \|X T^{j}h\|^2\\
&\leq N^2 \|X\|^2 C_T^2\|h\|^2
\end{align*}
where as usual $C_T>0$ is a constant satisfying
$$
\|\phi(T)\|\leq C_T\|\phi\|_{\infty}
$$
for every $\phi\in A(\D)$. Thus, $[X]=0$ if and only if
$$
\sum_{n=2N}^\infty \left\|\left( LT^n
+\sum_{j=0}^{N-1}X_{j}T^{n-1-j}\right)h\right\|^2\leq c\|h\|^2
$$
which is in turn equivalent to
$$
\sum_{n=2N}^\infty \left\|\left( LT^N
+\sum_{j=0}^{N-1}X_{j}T^{N-1-j}\right)T^{n-N}h\right\|^2\leq
c\|h\|^2
$$
and thus to
$$
LT^{N}+\sum_{j=0}^{N-1}X_{j}T^{N-1-j}\in
Z_{\E}(T).
$$
\end{proof}

\begin{definition}\label{d-poly}
Let $\E$ be a separable Hilbert space. Given two Hilbert modules $(H^2(\E),T_1)$ and $(\hil,T_2)$, we
define the \emph{polynomial subgroup} $\Ext^1_{\rm{poly}}(T_2,T_1)$
of $\Ext^1_{A(\D)}(T_2,T_1)$  to be the subgroup of elements $[X]$
such that $S_{\E}^{*N}XT_2^N=0$ for some integer $N\geq 0$.
\end{definition}
We are primarily interested in the case of $T_1=S_{\E}$ or $T_1=S_{\E}^*$.
In particular, we obtain the following consequence of Theorem
\ref{t-innercrit}.

\begin{corollary}\label{c-ext0crit}
Let $S_{\E}:H^2(\E)\to H^2(\E)$ be the unilateral shift and let
$(\hil,T)$ be a Hilbert module. Then
$$
B(\hil,\E)T+Z_{\E}(T)=B(\hil,\E)
$$
if and only if
$$
\Ext^1_{\rm{poly}}(T,S_{\E}^*)=0.
$$
\end{corollary}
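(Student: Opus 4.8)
The plan is to read the corollary off Theorem~\ref{t-innercrit}, after recording two elementary facts. First, $Z_{\E}(T)$ is a linear subspace of $B(\hil,\E)$ that is stable under right multiplication by $T$, hence by any power $T^{N}$: if $X\in Z_{\E}(T)$ then $\sum_{n\geq 0}\|(XT^{N})T^{n}h\|^{2}=\sum_{n\geq N}\|XT^{n}h\|^{2}$, which is dominated by $c_{X}\|h\|^{2}$, so $XT^{N}\in Z_{\E}(T)$. Second, for $W\in B(\hil,\E)$ the operator $X\colon\hil\to H^{2}(\E)$ given by $Xh=Wh$ (the constant function with value $Wh$) satisfies $S_{\E}^{*}X=0$; thus $S_{\E}^{*1}XT^{1}=0$, so by Lemma~\ref{l-finitebounded} the operator matrix with diagonal entries $S_{\E}^{*},T$ and upper-right entry $X$ is polynomially bounded, and therefore $[X]$ is a genuine element of $\Ext^{1}_{A(\D)}(T,S_{\E}^{*})$, indeed of $\Ext^{1}_{\rm{poly}}(T,S_{\E}^{*})$. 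Note also that $B(\hil,\E)T+Z_{\E}(T)\subseteq B(\hil,\E)$ holds trivially, so only the reverse inclusion carries content in the asserted identity.

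For the implication ``$B(\hil,\E)T+Z_{\E}(T)=B(\hil,\E)\Rightarrow\Ext^{1}_{\rm{poly}}(T,S_{\E}^{*})=0$'', I would first promote the hypothesis to $B(\hil,\E)T^{N}+Z_{\E}(T)=B(\hil,\E)$ for every $N\geq 1$ by induction on $N$. Given $W\in B(\hil,\E)$, write $W=AT^{N}+Z$ with $A\in B(\hil,\E)$, $Z\in Z_{\E}(T)$; then write $A=BT+Z'$ with $B\in B(\hil,\E)$, $Z'\in Z_{\E}(T)$, so that $W=BT^{N+1}+(Z'T^{N}+Z)$, where $Z'T^{N}+Z\in Z_{\E}(T)$ by the two facts above. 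Now take any $[X]\in\Ext^{1}_{\rm{poly}}(T,S_{\E}^{*})$, say with $Xh=\sum_{n\geq 0}z^{n}X_{n}h$ and $S_{\E}^{*N}XT^{N}=0$ for some $N\geq 0$. If $N=0$ then $X=0$ and $[X]=0$; if $N\geq 1$ then the operator $\sum_{j=0}^{N-1}X_{j}T^{N-1-j}$ lies in $B(\hil,\E)=B(\hil,\E)T^{N}+Z_{\E}(T)$, so Theorem~\ref{t-innercrit} yields $[X]=0$. Hence $\Ext^{1}_{\rm{poly}}(T,S_{\E}^{*})=0$.

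For the converse, assume $\Ext^{1}_{\rm{poly}}(T,S_{\E}^{*})=0$ and fix $W\in B(\hil,\E)$; I must produce a decomposition $W\in B(\hil,\E)T+Z_{\E}(T)$. Let $X$ be the constant-valued operator $Xh=Wh$ of the first paragraph, so $X_{0}=W$ and $X_{n}=0$ for $n\geq 1$. As noted, $[X]\in\Ext^{1}_{\rm{poly}}(T,S_{\E}^{*})$, hence $[X]=0$. Applying Theorem~\ref{t-innercrit} with $N=1$, the sum $\sum_{j=0}^{N-1}X_{j}T^{N-1-j}$ reduces to $X_{0}=W$, so $W\in B(\hil,\E)T^{1}+Z_{\E}(T)=B(\hil,\E)T+Z_{\E}(T)$. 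Since $W\in B(\hil,\E)$ was arbitrary, $B(\hil,\E)=B(\hil,\E)T+Z_{\E}(T)$, completing the proof.

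The computations here are routine; the one point to get right is the bookkeeping. Specifically, one must check that the set $B(\hil,\E)T^{N}+Z_{\E}(T)$ appearing in Theorem~\ref{t-innercrit} no longer depends on $N$ once it fills $B(\hil,\E)$ for $N=1$ (this is the induction above, resting on the $T$-invariance of $Z_{\E}(T)$), and that the constant-valued operator $X$ genuinely represents a class in $\Ext^{1}_{A(\D)}(T,S_{\E}^{*})$ (this is exactly what Lemma~\ref{l-finitebounded} delivers). Everything else is a direct invocation of Theorem~\ref{t-innercrit}.
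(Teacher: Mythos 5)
Your proof is correct and follows essentially the same route as the paper: the $T$-stability of $Z_{\E}(T)$ upgraded by iteration to $B(\hil,\E)T^{N}+Z_{\E}(T)=B(\hil,\E)$, then Theorem~\ref{t-innercrit} in one direction, and in the other the constant-valued operator $Xh=Wh$ with $S_{\E}^{*}X=0$, Lemma~\ref{l-finitebounded}, and Theorem~\ref{t-innercrit} again. Your explicit induction and the $N=0$, $N=1$ bookkeeping just spell out what the paper leaves as ``an iterative argument.''
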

\begin{proof}
Note that $Z_{\E}(T)T\subset Z_{\E}(T)$. Thus, if
$$
B(\hil,\E)T+Z_{\E}(T)=B(\hil,\E)
$$
then by using an iterative argument we find
$$
B(\hil,\E)T^N+Z_{\E}(T)=B(\hil,\E)
$$
for every $N\geq 0$, and Theorem \ref{t-innercrit} immediately
implies that
$$
 \Ext^1_{\rm{poly}}(T,S_{\E}^*)=0.
$$
Conversely, assume the polynomial subgroup vanishes and fix $X\in
B(\hil,\E)$. In light of the equality $S^*_{\E}X=0$, Lemma
\ref{l-finitebounded} implies that the operator $X:\hil \to H^2(\E)$
gives rise to an element $[X]$ in $\Ext^1_{\rm{poly}}(T,S_{\E}^*)$
and by Theorem \ref{t-innercrit} we find
$$
X\in B(\hil,\E)T+Z_{\E}(T).
$$
Since $X\in B(\hil,\E)$ was arbitrary, we see that
$$
B(\hil,\E)T+Z_{\E}(T)=B(\hil,\E).
$$
\end{proof}

\begin{corollary}\label{c-bddbelow}
Let $S_{\E}:H^2(\E)\to H^2(\E)$ be the unilateral shift and let
$(\hil,T)$ be a Hilbert module. If
$$
\Ext^1_{\rm{poly}}(T,S_{\E})=\Ext_{\rm{poly}}^1(T,S_{\E}^*)=0,
$$
then $T$ is bounded below. Conversely, if $T$ is bounded below, then
$$
\Ext_{\rm{poly}}^1(T,S_{\E}^*)=0.
$$
\end{corollary}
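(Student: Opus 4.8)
The plan is to extract both directions from Corollary \ref{c-ext0crit}, which tells us that $\Ext^1_{\rm{poly}}(T,S_{\E}^*)=0$ is equivalent to the algebraic identity $B(\hil,\E)T+Z_{\E}(T)=B(\hil,\E)$. For the converse direction (the easy one), suppose $T$ is bounded below, say $\|Th\|\geq \delta\|h\|$ for some $\delta>0$ and all $h\in \hil$. Then $T$ has closed range and is injective, so $T$ admits a bounded left inverse $T^{-1}_{\ell}\in B(\hil)$ with $T^{-1}_{\ell}T=\id$. Given any $X\in B(\hil,\E)$, write $X=(XT^{-1}_{\ell})T$; since $XT^{-1}_{\ell}\in B(\hil,\E)$, this exhibits $X\in B(\hil,\E)T\subseteq B(\hil,\E)T+Z_{\E}(T)$. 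Hence the identity of Corollary \ref{c-ext0crit} holds and $\Ext^1_{\rm{poly}}(T,S_{\E}^*)=0$.

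For the forward direction, assume both $\Ext^1_{\rm{poly}}(T,S_{\E})=0$ and $\Ext^1_{\rm{poly}}(T,S_{\E}^*)=0$, and suppose toward a contradiction that $T$ is not bounded below. Then there is a sequence of unit vectors $h_m\in \hil$ with $\|Th_m\|\to 0$. I want to use this to produce an operator in $B(\hil,\E)$ that is \emph{not} in $B(\hil,\E)T+Z_{\E}(T)$, contradicting Corollary \ref{c-ext0crit}. The idea is that membership in $B(\hil,\E)T$ forces a uniform lower bound tied to $T$: if $X=YT$ with $Y\in B(\hil,\E)$, then $\|Xh\|\leq \|Y\|\,\|Th\|$, so $X$ must be ``small on the vectors where $T$ is small''. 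Meanwhile, membership in $Z_{\E}(T)$ is governed by the condition $\sum_n\|XT^nh\|^2\leq c\|X\|^2_{Z}\|h\|^2$ from Definition \ref{d-z} — and we have not yet used the hypothesis $\Ext^1_{\rm{poly}}(T,S_{\E})=0$, which by Theorem \ref{t-extscharact} and Definition \ref{d-z} says precisely that $Z_{\E}(T)\cap B(\hil,\E)$, viewed through the bilateral picture, is constrained; more to the point, I expect to use the symmetric criterion (the analogue of Corollary \ref{c-ext0crit} for $S_{\E}$ in place of $S_{\E}^*$, which follows from the cited results of \cite{carlson1995, carlson1997}) to get a second algebraic identity. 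Combining the two identities should pin down enough structure to force $T$ bounded below.

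Concretely, here is the mechanism I would carry out. Pick a fixed nonzero vector $e\in \E$ and consider rank-one-type operators $X_f h = \langle h, f\rangle e$ for $f\in \hil$ a unit vector. If $X_f = Y_fT + W_f$ with $W_f\in Z_{\E}(T)$, then on one hand $\|X_f f\| = \|f\|^2\|e\| = \|e\|$, while $\|Y_fT f\|\leq \|Y_f\|\,\|Tf\|$ and $\|W_f f\|\leq c_{W_f}^{1/2}\|f\|$; the trouble is that the constants $\|Y_f\|$ and $c_{W_f}$ may blow up as $f$ ranges over the bad sequence, so a single choice of $X$ will not work — I need one fixed $X$ that defeats \emph{all} decompositions simultaneously. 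The remedy is to choose $X$ supported ``at scale zero'' for $T$: using the bad sequence $h_m$, pass to the closed span $\M$ of a suitable subsequence on which $T$ acts nearly like a weighted shift with weights tending to $0$, build $X$ as an isometry-like map from a model subspace of $\M$ to $\E$ so that $X$ itself is bounded but $XT^{-1}_{\ell}$ (if it existed) would be unbounded, and simultaneously arrange $\sum_n\|XT^nh\|^2=\infty$ for some $h$ so $X\notin Z_{\E}(T)$. The hypothesis $\Ext^1_{\rm{poly}}(T,S_{\E})=0$ enters to rule out the remaining ``co-analytic'' obstruction, i.e.\ to prevent $X$ from being absorbed after adjusting by an element of $Z_{\E}(T)$ from the other side.

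I expect the main obstacle to be exactly this uniformity issue: proving that \emph{a single} bounded operator $X:\hil\to \E$ can be chosen that lies outside $B(\hil,\E)T+Z_{\E}(T)$ whenever $T$ fails to be bounded below, rather than merely showing each individual bad vector obstructs some particular decomposition. This requires carefully exploiting a genuine spectral/invariant-subspace structure coming from the failure of boundedness below (a reducing or semi-invariant subspace on which $T$ looks like an operator with $0$ in its approximate point spectrum in a controlled way), and it is here that both hypotheses $\Ext^1_{\rm{poly}}(T,S_{\E})=\Ext^1_{\rm{poly}}(T,S_{\E}^*)=0$ must be used together — neither alone suffices, since e.g.\ a nonzero quasinilpotent operator can have $\Ext^1_{\rm{poly}}(T,S_{\E}^*)=0$ without being bounded below.
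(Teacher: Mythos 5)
Your converse direction is fine and is exactly the paper's argument: bounded below gives a left inverse, hence $B(\hil,\E)T=B(\hil,\E)$, and Corollary \ref{c-ext0crit} applies. The forward direction, however, has a genuine gap. You set up a proof by contradiction and propose to construct, from a sequence of unit vectors with $\|Th_m\|\to 0$, a single bounded operator $X$ lying outside $B(\hil,\E)T+Z_{\E}(T)$; but you never carry out the construction, and you yourself flag the uniformity problem as unresolved. More importantly, you never actually use the hypothesis $\Ext^1_{\rm{poly}}(T,S_{\E})=0$ in any concrete step --- it only appears as a hope that it will ``rule out the remaining co-analytic obstruction.'' As written, the forward implication is not proved.

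The missing idea is that this hypothesis can be used directly, with no contradiction argument and no construction at all. By Theorem \ref{t-extscharact} (together with Definition \ref{d-z}), every $X\in Z_{\E}(T)$ --- an operator with range in the constants $\E$ --- gives rise to an element $[X]\in\Ext^1_{\rm{poly}}(T,S_{\E})$ (it is polynomial because $S_{\E}^*X=0$). Triviality of that group yields $X=S_{\E}L-LT$ for some bounded $L$, and compressing onto $\E$ (which kills $S_{\E}L$) gives $X=-P_{\E}LT$. Hence $Z_{\E}(T)\subset B(\hil,\E)T$. Now the other hypothesis, via Corollary \ref{c-ext0crit}, gives $B(\hil,\E)=B(\hil,\E)T+Z_{\E}(T)\subset B(\hil,\E)T$, and the identity $B(\hil,\E)=B(\hil,\E)T$ forces $T$ to be bounded below (e.g.\ applying it to rank-one operators $h\mapsto\langle h,x\rangle e$ shows $T^*$ is surjective). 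This is the paper's proof; your planned route would at best re-derive a special case of this the hard way, and there is no evidence the construction you sketch can be made to work uniformly. Your closing remark that neither hypothesis alone suffices is correct (a nonzero nilpotent $T$ has $Z_{\E}(T)=B(\hil,\E)$, hence $\Ext^1_{\rm{poly}}(T,S_{\E}^*)=0$, without being bounded below), but it does not repair the missing argument.
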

\begin{proof}
Assume first that $T$ is bounded below. Then, $T$ is
left-invertible so that
$$
B(\hil,\E)T=B(\hil,\E)
$$
and we obtain
$$
\Ext_{\rm{poly}}^1(T,S_{\E}^*)=0
$$
by Corollary \ref{c-ext0crit}.

Conversely, assume
$$
\Ext^1_{\rm{poly}}(T,S_{\E})=\Ext_{\rm{poly}}^1(T,S_{\E}^*)=0.
$$
By Theorem \ref{t-extscharact} we know that $X\in Z_{\E}(T)$ if and
only if $[X]\in \Ext_{\rm{poly}}^1(T,S_{\E})$. Since this group is
assumed to be trivial, we see $X\in Z_{\E}(T)$ implies
$X=S_{\E}L-LT$. Now, the range of $X$ lies in $\E$, so we obtain
$X=-P_{\E}LT$, whence $Z_{\E}(T)\subset B(\hil,\E)T$. Using
$\Ext_{\rm{poly}}^1(T,S_{\E}^*)=0$, Corollary \ref{c-ext0crit}
implies
$$
B(\hil,\E)=B(\hil,\E)T+Z_{\E}(T)\subset B(\hil,\E)T
$$
and thus $T$  is bounded below.
\end{proof}

It is known that the fact that $T$ is bounded below isn't sufficient
for the group $\Ext_{A(\D)}^1(T,S_{\E})$ to vanish, so that the
preceding corollary cannot be improved to an equivalence. In fact,
in the case where $T$ is a contraction, the vanishing of this
extension group is equivalent to the operator $T$ being similar to
an isometry by Theorem \ref{t-isomproj}.

We obtain another consequence of Corollary \ref{c-ext0crit}, which
applies in particular to self-adjoint contractions with closed
range.

\begin{corollary}\label{c-ext0sa}
Let $S_{\E}:H^2(\E)\to H^2(\E)$ be the unilateral shift and let
$(\hil,T)$ be a Hilbert module. If
$$
T\hil\subset (\ker T)^\perp\subset T^*\hil
$$
then
$$
\Ext^1_{\rm{poly}}(T,S_{\E}^*)=0.
$$
\end{corollary}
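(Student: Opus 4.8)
The plan is to reduce the hypothesis $T\hil\subset(\ker T)^\perp\subset T^*\hil$ to the hypothesis of Corollary \ref{c-ext0crit}, namely $B(\hil,\E)T+Z_{\E}(T)=B(\hil,\E)$. Fix an arbitrary $X\in B(\hil,\E)$; I must write $X=YT+W$ with $Y\in B(\hil,\E)$ and $W\in Z_{\E}(T)$. The natural candidate comes from decomposing $\hil=\ker T\oplus(\ker T)^\perp$. On $(\ker T)^\perp$, the inclusion $(\ker T)^\perp\subset T^*\hil$ together with the closed graph theorem should give a bounded operator that inverts $T^*$ from the right in a suitable sense; dually, the first inclusion $T\hil\subset(\ker T)^\perp$ says that the range of $T$ avoids $\ker T$, so that $T$ restricted to $(\ker T)^\perp$ is bounded below, hence $T$ has closed range and $T\hil=(\ker T)^\perp$ (using both inclusions, since $(\ker T)^\perp=\ol{T^*\hil}\subset\ol{T\hil}$... one must be careful here, but the upshot is that $T$ is bounded below on $(\ker T)^\perp$). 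Let $T_0:(\ker T)^\perp\to(\ker T)^\perp$ denote this restriction, which is then invertible with bounded inverse $T_0^{-1}$.

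Concretely, let $P$ be the orthogonal projection of $\hil$ onto $(\ker T)^\perp$, and set $Y=XPT_0^{-1}P\in B(\hil,\E)$ — that is, $Y$ annihilates $\ker T$ and on $(\ker T)^\perp$ acts as $X$ composed with $T_0^{-1}$. Then for $h\in\hil$ write $h=h_0+h_1$ with $h_0\in\ker T$, $h_1\in(\ker T)^\perp$; we have $Th=Th_1\in(\ker T)^\perp$, so $YTh=X T_0^{-1}(Th_1)=Xh_1=XPh$. Hence $X-YT=X(I-P)=XQ$ where $Q$ is the projection onto $\ker T$. So it remains to check that $W:=XQ$ lies in $Z_{\E}(T)$. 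But $QT=0$? No — $Q$ projects onto $\ker T$, and $T(\ker T)=0$, so $T^*Q$ need not vanish; what we need is $WT^n=XQT^n$ for $n\geq 1$. Here I use the inclusion $T\hil\subset(\ker T)^\perp$: it gives $QT=0$, hence $QT^n=0$ for all $n\geq1$, so $WT^n=0$ for $n\geq1$ and consequently $\sum_{n=0}^\infty\|WT^nh\|^2=\|Wh\|^2\leq\|X\|^2\|h\|^2$, showing $W\in Z_{\E}(T)$.

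Thus $X=YT+W\in B(\hil,\E)T+Z_{\E}(T)$, and since $X$ was arbitrary, $B(\hil,\E)T+Z_{\E}(T)=B(\hil,\E)$; Corollary \ref{c-ext0crit} then yields $\Ext^1_{\rm{poly}}(T,S_{\E}^*)=0$. The main obstacle I anticipate is justifying cleanly that $T_0$ is invertible — i.e., extracting from the two chained inclusions both that $T$ is bounded below on $(\ker T)^\perp$ and that its range there is all of $(\ker T)^\perp$. The inclusion $(\ker T)^\perp\subset T^*\hil$ plus the open mapping theorem gives that $T^*$ maps onto a dense-enough subspace; pairing this with $T\hil\subset(\ker T)^\perp$ (equivalently $QT=0$) should force $\ran T=(\ker T)^\perp$ and $T_0$ bijective with bounded inverse. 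Once that structural fact is in hand the rest is the short computation above. One should also double-check the boundedness of $T_0^{-1}$ is exactly what makes $Y$ bounded; this is where the closed-range consequence of the hypothesis is essential, and it is the only non-formal ingredient.
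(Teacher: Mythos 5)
Your overall decomposition is exactly the paper's: write $X=XP+XQ$ with $P$ the orthogonal projection onto $(\ker T)^\perp$ and $Q=I-P$, handle $XQ$ via $QT=0$ (your verification that $XQ\in Z_{\E}(T)$ is correct, and it is the same as the paper's), and handle $XP$ by factoring it through $T$. The gap is in the structural claim you lean on for that second part: the hypotheses do \emph{not} force $T\hil=(\ker T)^\perp$, so $T_0$ need not be invertible as an operator on $(\ker T)^\perp$, and the formula $Y=XPT_0^{-1}P$ is ill-defined in general. A counterexample is $T=S_{\E}$ itself: $\ker S_{\E}=\{0\}$, so the hypotheses read $S_{\E}H^2(\E)\subset H^2(\E)\subset S_{\E}^*H^2(\E)$, which hold, yet $S_{\E}H^2(\E)=zH^2(\E)$ is a proper subspace of $(\ker S_{\E})^\perp=H^2(\E)$. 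The point at which you yourself hedge (``one must be careful here'') is precisely where the argument breaks: from $T\hil\subset(\ker T)^\perp\subset T^*\hil$ one cannot conclude $\ol{T^*\hil}\subset\ol{T\hil}$, hence not surjectivity of $T_0$ onto $(\ker T)^\perp$.

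The gap is local and fixable in two ways. (a) Keep your construction but invert $T_0$ only on its range: the inclusion $(\ker T)^\perp\subset T^*\hil$ forces $T^*\hil=(\ker T)^\perp$ (since always $T^*\hil\subset\ol{T^*\hil}=(\ker T)^\perp$), so $\ran T^*$ is closed, hence $\ran T$ is closed as well; therefore $T_0:(\ker T)^\perp\to T\hil$ is a bounded bijection with bounded inverse, and $Y:=XT_0^{-1}P_{T\hil}$ is bounded and satisfies $YT=XP$, after which your computation goes through verbatim. (b) The paper avoids the issue entirely: since $(XP)^*=PX^*$ has range contained in $(\ker T)^\perp\subset T^*\hil=\ran T^*$, Douglas's lemma gives directly $XP\in B(\hil,\E)T$, with no closed-range or invertibility discussion at all. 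Either repair yields $B(\hil,\E)=B(\hil,\E)T+Z_{\E}(T)$ and completes the proof via Corollary \ref{c-ext0crit} exactly as you intended.
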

\begin{proof}
Let $X\in B(\hil,\E)$ and set $X_1=XP_{(\ker T)^\perp}$ and
$X_2=XP_{\ker T}$ (here $P_M$ denotes the orthogonal projection onto
the closed subspace $M\subset \hil$). Then, we have that the range
of $X_1^*$ is contained in $(\ker T)^\perp \subset T^*\hil$, and
thus $X_1\in B(\hil,\E)T$ by Douglas's lemma. Moreover, since
$T\hil\subset (\ker T)^\perp $, we get $X_2 T^k=0$ for every $k\geq
1$. Therefore, $X_2\in Z_{\E}(T)$. The decomposition $X=X_1+X_2$
shows that
$$
B(\hil,\E)=B(\hil,\E)T+Z_{\E}(T)
$$
and an application of Corollary \ref{c-ext0crit} finishes the proof.
\end{proof}

We close this section with an example. In view of Corollary
\ref{c-ext0crit}, one way of establishing that $(H^2(\E),S_{\E})$ is
not a projective Hilbert module would be to find a polynomially
bounded operator $T\in B(\hil)$ that satisfies
$$
B(\hil,\E)T+Z_{\E}(T)\neq B(\hil,\E).
$$
It is easy to exhibit a polynomially bounded operator that satisfies
the weaker condition
$$
B(\hil,\E)T\cup Z_{\E}(T)\neq B(\hil,\E).
$$
Let $\E=\C$, $\hil=H^2\oplus L^2$ and $T=S_{\C}^*\oplus U$ where $U$
is the unitary operator of multiplication by the variable $e^{it}$
on $L^2$. Define $X_1:H^2\to \C$ as
$$
X_1 h=h(0)
$$
for every $h\in H^2$. Choose $\xi\in L^2$ to be a positive
function with the property that $\xi^2\notin L^2$ and define
$X_2: L^2\to \C$ as
$$
X_2 f=\langle f,\xi\rangle
$$
for every $f\in L^2$. Set $X(h_1\oplus h_2)=X_1 h_1+X_2 h_2$. We
have that $X_1 1\neq 0$, whence $X$ does not vanish on $\ker T$ and
$X\notin B(\hil,\E)T$. Moreover, the sum $\sum_{n=0}^\infty \|X_2
U^n \xi\|^2$ is infinite since
$$
\sum_{n=0}^\infty \|X_2 U^n \xi\|^2=\sum_{n=0}^\infty |\langle
\xi^2, e^{int}\rangle|^2\geq
\frac{1}{2}\sum_{n=-\infty}^{\infty}|\langle \xi^2,
e^{int}\rangle|^2
$$
and $\xi^2\notin L^2$. Therefore, $X\notin Z_{\E}(T)$ and we
conclude that
$$
X\notin B(\hil,\E)T\cup Z_{\E}(T).
$$
In particular, $[X]$ defines an ``almost" non-trivial element of
$\text{Ext}_{A(\D)}^1(T,S_{\C}^*)$.  Indeed, since $S_{\C}^* X=0$ we
have that $X$ gives rise to an element $[X]$ of
$\Ext_{A(\D)}^1(T,S_{\C}^*)$ by Lemma \ref{l-finitebounded}. Assume
that $\Lambda: \hil\to H^2$ satisfies $\Lambda \hil\subset zH^2$. We
see via Lemma \ref{l-innerchar} that
$$
X=S_{\C}^*\Lambda-\Lambda T
$$
is equivalent to $X\in Z_{\E}(T)$. Therefore,
$$
X\neq S_{\C}^*\Lambda-\Lambda T
$$
whenever $\Lambda \hil\subset zH^2$. Of course, this does not imply
that $[X]$ is a non-trivial element of $\Ext_{A(\D)}^1(T,S_{\C}^*)$,
but it shows that the equation
$$
X= S_{\C}^*\Lambda-\Lambda T
$$
has no solution if we impose the extra condition that the range of
$\Lambda $ should lie entirely in the codimension one subspace
$zH^2$.

\section{Explicit calculations of the subspace Z}
The goal of this section is to identify the spaces
$Z_{\E}(S^*_{\Fil})$ and $Z_{\E}(S_{\Fil})$ for separable Hilbert
spaces $\E, \Fil$ (recall Definition \ref{d-z}). First we set up
some notation. Given a bounded operator $X:H^2(\Fil)\to \E$, we can
write
$$
X^* e =\sum_{n=0}^\infty z^n X_n^*e
$$
for every $e\in \E$, where $X_n^*: \E\to \Fil$ for each $n$. In
particular, $X_n:\Fil\to \E$ and
$$
X h =\sum_{n=0}^\infty X_n \widehat{h}(n)
$$
where $\widehat{h}(n)\in \Fil$ denotes the $n$-th Fourier
coefficient of the function $h\in H^2(\Fil)$. Associated to $X$,
there is the Toeplitz operator
$$
T_{X}:H^2(\Fil)\to H^2(\E)
$$
defined as
$$
T_{X}h=\sum_{n=0}^\infty z^n \left(\sum_{m=n}^\infty
X_{m-n}\widehat{h}(m)\right)
$$
and the Hankel operator
$$
H_{X}: H^2(\Fil)\to  H^2(\E)
$$
defined as
$$
H_{X} h=\sum_{n=0}^\infty z^n \left(\sum_{m=0}^\infty
X_{m+n}\widehat{h}(m)\right).
$$
Typically, $T_{X}$ and $H_{X}$ are unbounded operators, but they are
always defined on the dense subset of polynomials.

\begin{proposition}\label{p-zss*}
If $S_{\Fil}:H^2(\Fil)\to H^2(\Fil)$ is the unilateral shift, then
$Z_{\E}(S_{\Fil}^*)$ consists of the operators $X\in
B(H^2(\Fil),\E)$ with the property that  $T_{X}$ is bounded, while
$Z_{\E}(S_{\Fil})$ consists of the operators $X\in B(H^2(\Fil),\E)$
with the property that  $H_{X}$ is bounded.
\end{proposition}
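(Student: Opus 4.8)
The plan is to prove both statements by the same mechanism: for a bounded operator $X:H^2(\Fil)\to\E$ with associated coefficients $X_n:\Fil\to\E$, the series defining $T_X$ and $H_X$ compute, Fourier coefficient by Fourier coefficient, the iterates $X(S_{\Fil}^*)^n$ and $XS_{\Fil}^n$ respectively. Once this is made precise, Definition \ref{d-z} translates directly into boundedness of $T_X$, resp.\ $H_X$.

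First I would record the elementary identity $Xh=\sum_{n\geq0}X_n\widehat h(n)$, which follows by pairing with $e\in\E$ and using the expansion $X^*e=\sum_n z^nX_n^*e$. Next, for a polynomial $h\in H^2(\Fil)$ I compute the iterates in terms of Fourier coefficients, namely $\widehat{(S_{\Fil}^*)^nh}(m)=\widehat h(m+n)$ and $\widehat{S_{\Fil}^nh}(m)=\widehat h(m-n)$ for $m\geq n$ (and $0$ otherwise), and read off that
$$
X(S_{\Fil}^*)^nh=\sum_{m\geq0}X_m\widehat h(m+n),\qquad XS_{\Fil}^nh=\sum_{m\geq0}X_{m+n}\widehat h(m).
$$
Comparing with the definitions of $T_X$ and $H_X$, the right-hand sides are precisely the $n$-th Fourier coefficients of $T_Xh$ and $H_Xh$. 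Since for a polynomial $h$ all the inner series occurring here are finite sums, no convergence issue arises and one obtains the Parseval-type identities
$$
\|T_Xh\|^2=\sum_{n=0}^\infty\|X(S_{\Fil}^*)^nh\|^2,\qquad \|H_Xh\|^2=\sum_{n=0}^\infty\|XS_{\Fil}^nh\|^2
$$
for every polynomial $h$, valid as equalities in $[0,\infty]$.

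With these identities the rest is routine. If $T_X$ is bounded, the first identity gives $\sum_n\|X(S_{\Fil}^*)^nh\|^2\leq\|T_X\|^2\|h\|^2$ for polynomial $h$; conversely, if $X\in Z_{\E}(S_{\Fil}^*)$, the same identity shows that $T_X$ is bounded on the dense subspace of polynomials, hence extends to a bounded operator. To move the inequality from polynomials to all of $H^2(\Fil)$ — which is needed to match Definition \ref{d-z} literally — I would observe that for each fixed $N$ the map $h\mapsto\sum_{n=0}^{N}\|X(S_{\Fil}^*)^nh\|^2$ is continuous, so $h\mapsto\sum_{n=0}^\infty\|X(S_{\Fil}^*)^nh\|^2$ is lower semicontinuous, and therefore an inequality of the form $\sum_n\|X(S_{\Fil}^*)^nh\|^2\leq c\|h\|^2$ holding on a dense set persists on the whole space. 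The argument for $S_{\Fil}$ and $H_X$ is word for word the same.

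The only place that calls for a little care — and where I expect to spend most of the writing — is the bookkeeping with Fourier coefficients together with the remark that, for polynomial $h$, the inner series in the definitions of $T_X$ and $H_X$ really are finite sums, so that the displayed Parseval identities are genuine equalities rather than merely formal ones. Once that is settled, the equivalence with membership in $Z_{\E}(S_{\Fil}^*)$ and $Z_{\E}(S_{\Fil})$ is immediate.
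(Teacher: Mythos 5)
Your proposal is correct and follows essentially the same route as the paper: compute $X S_{\Fil}^{*n}h$ and $X S_{\Fil}^{n}h$ in terms of Fourier coefficients, recognize them as the $n$-th coefficients of $T_Xh$ and $H_Xh$, and conclude via the Parseval identities $\|T_Xh\|^2=\sum_n\|XS_{\Fil}^{*n}h\|^2$ and $\|H_Xh\|^2=\sum_n\|XS_{\Fil}^{n}h\|^2$. Your additional care about polynomials and lower semicontinuity is harmless but not essential, since boundedness of $X$ already makes the inner sums converge and the identities hold as equalities in $[0,\infty]$ for every $h$, which is how the paper argues.
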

\begin{proof}
We first observe that
\begin{align*}
\sum_{n=0}^\infty \|XS_{\Fil}^{*n} h\|^2&=\sum_{n=0}^\infty
\left\|\sum_{m=0}^\infty X_m \widehat{h}(m+n)\right\|^2\\
&=\sum_{n=0}^\infty \left\|\sum_{m=n}^\infty X_{m-n} \widehat{h}(m)\right\|^2\\
&=\|T_Xh\|^2
\end{align*}
and
\begin{align*}
\sum_{n=0}^\infty \|XS_{\Fil}^{n} h\|^2&=\sum_{n=0}^\infty
\left\|\sum_{m=n}^\infty X_m \widehat{h}(m-n)\right\|^2\\
&=\sum_{n=0}^\infty \left\|\sum_{m=0}^\infty X_{m+n} \widehat{h}(m)\right\|^2\\
&=\|H_Xh\|^2.
\end{align*}
The result now follows directly from the definition of the spaces
$Z_{\E}(S_{\Fil}^*)$ and $Z_{\E}(S_{\Fil})$.
\end{proof}
It is well-known (see Chapter 5 of \cite{bercovici1988}) that $T_X$
is bounded if and only if the function
$$
\Phi_X(z)=\sum_{n=0}^\infty z^n X_n
$$
belongs to $H^\infty(B(\Fil,\E))$, the space of weakly holomorphic
bounded functions on $\D$ with values in $B(\Fil,\E)$. Furthermore,
$H_X$ is bounded if and only if we can find for every integer $n<0$
an operator $X_n:\Fil\to \E$ with the property that the function
$$
\sum_{n=-\infty}^\infty z^n X_n
$$
belongs to $L^\infty(B(\Fil,\E))$, the space of essentially bounded
weakly measurable functions from $\T$ into $B(\Fil,\E)$ (this is
usually referred to as the Nehari-Page theorem, see
\cite{page1970}).

In light of these remarks, let us examine what Proposition
\ref{p-zss*} says when $\E=\C$. In this case, any operator $X\in
B(H^2(\Fil),\E)$ acts as $Xh=\langle h,\xi\rangle$ for some fixed
$\xi\in H^2(\Fil)$ and thus
$$
X_n\widehat{h}(n)=\langle \widehat{h}(n),\widehat{\xi}(n)\rangle.
$$
We find
\begin{align*}
T_X h&=\sum_{n=0}^\infty \left(\sum_{m=n}^\infty \langle
\widehat{h}(m),\widehat{\xi}(m-n)\rangle\right)z^n
\end{align*}
and
\begin{align*}
H_{X}h&= \sum_{n=0}^\infty \left(\sum_{m=0}^\infty
\langle\widehat{h}(m),\widehat{\xi}(m+n)\rangle\right)z^n
\end{align*}
for every $h\in H^2(\Fil)$. This last equality shows that Corollary
3.1.6 in \cite{carlson1995} follows from Proposition \ref{p-zss*}
upon taking $\E=\C$. Of course, this is to be expected since $X\in
Z_{\C}(S_{\Fil})$ is equivalent to the fact that $X$ gives rise to
an element $[X]$ of $\Ext_{A(D)}^1(S_{\Fil},S_{\C})$, by Theorem
\ref{t-extscharact}. In addition, we see that $X\in
Z_{\C}(S^*_{\Fil})$ if and only if $\xi\in H^\infty(\Fil)$, while
$X\in Z_{\C}(S_{\Fil})$ if and only if there exists another
holomorphic function $\eta$ with the property that $\xi+\ol{\eta}\in
L^\infty (\Fil)$.

\section{Vanishing of the polynomial subgroup in the case of contractions}
The goal of this section is to show that
$\Ext_{\rm{poly}}^1(T,S_{\E}^*)=0$ whenever $T$ is a contraction. To
achieve it, we make use of the functional model of a completely
non-unitary contraction which we briefly recall (see \cite{nagy2010}
or \cite{bercovici1988} for greater detail).

Let $\E,\E_*$ be separable Hilbert spaces and let $\Theta\in
H^\infty(B(\E,\E_*))$ be a contractive (weakly) holomorphic
function. Define $\Delta\in L^\infty(B(\E))$ as follows
$$
\Delta(e^{it})=\sqrt{I-\Theta(e^{it})^*\Theta(e^{it})}.
$$
If we set
$$
M(\Theta)=\{\Theta u\oplus \Delta u: u\in H^2(\E) \},
$$
then the space $H(\Theta)$ is defined as
$$
H(\Theta)=(H^2(\E_*)\oplus \ol{\Delta L^2(\E)})\ominus M({\Theta})
$$
and we have
$$
S(\Theta)=P_{H(\Theta)}(S\oplus U)|H(\Theta)
$$
where $S=S_{\E_*}$  is the unilateral shift on $H^2(\E_*)$ and $U$
is the unitary operator of multiplication by the variable $e^{it}$
on $L^2(\E)$.

In order to proceed, we require two technical lemmas which are most
likely well-known. We provide the calculation for the reader's
convenience.

\begin{lemma}\label{l-projfunctmodel}
Let $e_*\in \E_*$. Then,
$$
P_{M(\Theta)}(e_*\oplus 0)=\Theta\Theta(0)^*e_*\oplus
\Delta\Theta(0)^*e_*
$$
and
$$
P_{H(\Theta)}(e_*\oplus 0)=(I-\Theta\Theta(0)^*e_*)\oplus
\left(-\Delta\Theta(0)^*e_*\right).
$$
\end{lemma}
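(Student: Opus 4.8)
The plan is to compute $P_{M(\Theta)}(e_*\oplus 0)$ directly from the orthogonality condition that characterizes the projection onto a graph-type subspace, and then to read off $P_{H(\Theta)}(e_*\oplus 0)$ by subtraction.

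First I would record the standard fact that the map $u\mapsto \Theta u\oplus \Delta u$ is an isometry of $H^2(\E)$ into $H^2(\E_*)\oplus\ol{\Delta L^2(\E)}$: indeed $\|\Theta u\oplus\Delta u\|^2=\|\Theta u\|_{L^2(\E_*)}^2+\|\Delta u\|_{L^2(\E)}^2=\langle(\Theta^*\Theta+\Delta^2)u,u\rangle=\|u\|^2$, using $\Theta(e^{it})^*\Theta(e^{it})+\Delta(e^{it})^2=I$ for a.e.\ $e^{it}$. Hence $M(\Theta)$ is a closed subspace, so the orthogonal projections onto $M(\Theta)$ and onto its orthocomplement $H(\Theta)$ are well defined, and since $e_*\oplus 0$ already lies in $H^2(\E_*)\oplus\ol{\Delta L^2(\E)}$ we have $P_{H(\Theta)}(e_*\oplus 0)=(e_*\oplus 0)-P_{M(\Theta)}(e_*\oplus 0)$. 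So it is enough to prove the first formula.

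Next, write $P_{M(\Theta)}(e_*\oplus 0)=\Theta u_0\oplus\Delta u_0$ for the unique $u_0\in H^2(\E)$ such that $(e_*-\Theta u_0)\oplus(-\Delta u_0)$ is orthogonal to $\Theta u\oplus\Delta u$ for every $u\in H^2(\E)$. Expanding this inner product and passing $\Theta$, $\Delta$ to adjoints gives, for every $u\in H^2(\E)$,
$$\langle e_*,\Theta u\rangle_{L^2(\E_*)}=\langle\Theta^*\Theta u_0,u\rangle_{L^2(\E)}+\langle\Delta^2 u_0,u\rangle_{L^2(\E)}=\langle u_0,u\rangle_{L^2(\E)},$$
using $\Theta^*\Theta+\Delta^2=I$ once more. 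On the left-hand side, $\langle e_*,\Theta u\rangle_{L^2(\E_*)}=\langle\Theta^*e_*,u\rangle_{L^2(\E)}$; since $u$ has vanishing negative Fourier coefficients while the boundary function $e^{it}\mapsto\Theta(e^{it})^*e_*$ has vanishing positive Fourier coefficients, only the constant terms survive, so $\langle\Theta^*e_*,u\rangle_{L^2(\E)}=\langle\Theta(0)^*e_*,u\rangle$, where $\Theta(0)^*e_*$ is viewed as a constant function in $H^2(\E)$. Therefore $\langle\Theta(0)^*e_*-u_0,u\rangle=0$ for all $u\in H^2(\E)$, whence $u_0=\Theta(0)^*e_*$. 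Substituting yields $P_{M(\Theta)}(e_*\oplus 0)=\Theta\Theta(0)^*e_*\oplus\Delta\Theta(0)^*e_*$, and subtracting from $e_*\oplus 0$ gives $P_{H(\Theta)}(e_*\oplus 0)=(e_*-\Theta\Theta(0)^*e_*)\oplus(-\Delta\Theta(0)^*e_*)$, which is the displayed formula (with $e_*$ identified with the corresponding constant function).

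There is no serious obstacle here; the one point requiring a moment's care is the identification $\langle e_*,\Theta u\rangle=\langle\Theta(0)^*e_*,u\rangle$, i.e.\ the observation that analyticity of $u$ annihilates all but the zeroth Fourier coefficient of $\Theta^*e_*$. Everything else is the routine "solve for the projection onto a graph" computation, combined with the basic identity $\Theta^*\Theta+\Delta^2=I$ coming from the definition of $\Delta$.
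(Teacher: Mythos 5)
Your proof is correct and follows essentially the same route as the paper: both arguments hinge on the identity $\Theta^*\Theta+\Delta^2=I$ together with the observation that $\langle e_*,\Theta u\rangle_{L^2(\E_*)}=\langle \Theta(0)^*e_*,u(0)\rangle_{\E}$ because $u$ is analytic while $\Theta^*e_*$ is coanalytic. The only cosmetic difference is that you solve the orthogonality condition for the unknown $u_0$, whereas the paper verifies directly that the candidate $\Theta\Theta(0)^*e_*\oplus\Delta\Theta(0)^*e_*$ has the same inner product against every element of $M(\Theta)$ as $e_*\oplus 0$; the computations involved are identical.
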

\begin{proof}
For any $u\in H^2(\E)$ we have
$$
\langle e_*\oplus 0,\Theta u\oplus \Delta u \rangle_{H^2(\E_*)\oplus
L^2(\E)}=\langle \Theta(0)^*e_*,u(0) \rangle_{\E}
$$
and
\begin{align*}
&\langle \Theta\Theta(0)^*e_*\oplus \Delta\Theta(0)^*e_*, \Theta
u\oplus \Delta u\rangle_{H^2(\E_*)\oplus L^2(\E)} \\
&=\langle
\Theta^*\Theta\Theta(0)^*e_*, u\rangle_{H^2(\E)}+\langle
\Delta^2\Theta(0)^*e_*,u \rangle_{L^2(\E)}\\
&=\langle \Theta^*\Theta\Theta(0)^*e_*, u\rangle_{L^2(\E)}+\langle
(I-\Theta^*\Theta)\Theta(0)^*e_*,u \rangle_{L^2(\E)}\\
&=\langle \Theta(0)^*e_*,u\rangle_{L^2(\E)}\\
&=\langle \Theta(0)^*e_*,u(0)\rangle_{\E}
\end{align*}
which shows the first equality, and the second follows immediately.
\end{proof}

\begin{lemma}\label{l-rangeST}
The range of the operator $S(\Theta)$ is
$$
\{ f_1\oplus f_2\in H(\Theta):f_1(0)\in \Theta(0)\E\}.
$$
\end{lemma}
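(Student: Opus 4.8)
The plan is to argue directly with the functional model, exploiting that $M(\Theta)$ is invariant under $S\oplus U$. Write $\kil = H^2(\E_*)\oplus\ol{\Delta L^2(\E)}$ and $V = S\oplus U$, so that $\kil = H(\Theta)\oplus M(\Theta)$ and $S(\Theta) = P_{H(\Theta)}V|_{H(\Theta)}$. First I would record the structural facts that $\kil$ is invariant under $V$ (because $H^2(\E_*)$ is $S$-invariant and $\ol{\Delta L^2(\E)}$ reduces the unitary $U$, since $\Delta$ is a multiplication operator and so commutes with multiplication by $e^{it}$) and, crucially, that $M(\Theta)$ is invariant under $V$: indeed $V(\Theta u\oplus\Delta u) = z\Theta u\oplus e^{it}\Delta u = \Theta(zu)\oplus\Delta(zu)$ with $zu\in H^2(\E)$. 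From the $V$-invariance of $M(\Theta)$ one obtains $P_{H(\Theta)}VP_{M(\Theta)} = 0$ on $\kil$, hence the compression identity $P_{H(\Theta)}Vh = S(\Theta)P_{H(\Theta)}h$ for every $h\in\kil$. This is the engine of the argument: to realize a given $g$ as a value of $S(\Theta)$ it suffices to produce some $h\in\kil$ with $P_{H(\Theta)}Vh = g$, since then $P_{H(\Theta)}h$ is a preimage.

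For the inclusion $\ran S(\Theta)\subseteq\{f_1\oplus f_2\in H(\Theta):f_1(0)\in\Theta(0)\E\}$, I would take $g = g_1\oplus g_2 = S(\Theta)(f_1\oplus f_2)$ with $f_1\oplus f_2\in H(\Theta)$, note that $V(f_1\oplus f_2) - g = P_{M(\Theta)}V(f_1\oplus f_2)\in M(\Theta)$, and read off the first coordinate: $zf_1 - g_1 = \Theta u$ for some $u\in H^2(\E)$. Evaluating at the origin gives $g_1(0) = -\Theta(0)u(0)\in\Theta(0)\E$, which is precisely the asserted condition.

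For the reverse inclusion, let $g = g_1\oplus g_2\in H(\Theta)$ with $g_1(0) = \Theta(0)e$ for some $e\in\E$; this is the only place the hypothesis is used, and it fixes the choice of $e$. I would guess the preimage by ``dividing by $z$'': viewing $e$ as a constant function in $H^2(\E)$, set $h_1 = z^{-1}(g_1 - \Theta e)$, $h_2 = e^{-it}(g_2 - \Delta e)$, and $h = h_1\oplus h_2$. Then $g_1 - \Theta e\in H^2(\E_*)$ vanishes at $0$ by the choice of $e$, so $h_1\in H^2(\E_*)$; and $g_2 - \Delta e\in\ol{\Delta L^2(\E)}$, a subspace invariant under $U^*$, so $h_2 = U^*(g_2 - \Delta e)\in\ol{\Delta L^2(\E)}$; hence $h\in\kil$. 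Finally $Vh - g = (zh_1 - g_1)\oplus(e^{it}h_2 - g_2) = -(\Theta e\oplus\Delta e)\in M(\Theta)$, so $P_{H(\Theta)}Vh = g$, and the compression identity gives $g = S(\Theta)P_{H(\Theta)}h\in\ran S(\Theta)$.

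I do not expect a genuine obstacle: once one has the right candidate preimage --- ``shift $g$ down by one after correcting by the element $\Theta e\oplus\Delta e$ of $M(\Theta)$, so that the holomorphic part still vanishes at the origin'' --- everything reduces to short verifications. The only points demanding care are that the Taylor coefficient at $0$ really cancels (so that $h_1$ is holomorphic), that the $L^2(\E)$-component remains in $\ol{\Delta L^2(\E)}$ after applying $U^*$, and that the compression identity is invoked correctly. Lemma \ref{l-projfunctmodel} is not needed along this route.
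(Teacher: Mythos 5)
Your proof is correct and follows essentially the same route as the paper's: the forward direction reads off $g_1(0)\in\Theta(0)\E$ from the fact that $V(f_1\oplus f_2)-g\in M(\Theta)$, and the backward direction uses exactly the paper's preimage $z^{-1}(g_1-\Theta e)\oplus U^*(g_2-\Delta e)$ after correcting by $\Theta e\oplus\Delta e\in M(\Theta)$. The only difference is that you make explicit the $V$-invariance of $M(\Theta)$ and the resulting compression identity, which the paper uses implicitly.
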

\begin{proof}
Assume that
$$
f_1\oplus f_2=S(\Theta)(v_1\oplus v_2)
$$
for $v_1\oplus v_2\in H(\Theta)$. Then, we can write
$$
f_1\oplus f_2=zv_1\oplus e^{it} v_2+\Theta u\oplus \Delta u
$$
for some $u\in H^2(\E)$, and therefore
$$
f_1(0)=\Theta(0)u(0)
$$
lies in the range of $\Theta(0)$.

Conversely, pick $f=f_1\oplus f_2\in H(\Theta)$ such that
$f_1(0)=\Theta(0)e$ for some $e\in \E$. Then, the function
$$
f_1-\Theta e\in H^2(\E_*) $$ vanishes at $z=0$, so we can find
another function $v_1\in H^2(\E_*)$ with the property that
$$
f_1-\Theta e=zv_1.
$$
Since $U\Delta=\Delta U$, we find that the function
$$
v_2=U^*(f_2-\Delta e)
$$
lies in $\ol{\Delta L^2(\E)}$ and satisfies
$$
Uv_2=f_2-\Delta e.
$$
We see that
\begin{align*}
P_{H(\Theta)}(Sv_1\oplus Uv_2)&=P_{H(\Theta)}((f_1-\Theta e)\oplus
(f_2-\Delta e))\\
&=P_{H(\Theta)} (f_1\oplus f_2)\\
&=f_1\oplus f_2
\end{align*}
and therefore
$$
f_1\oplus f_2=S(\Theta)P_{H(\Theta)}(v_1\oplus v_2)
$$
lies in the range of $S(\Theta)$.
\end{proof}

The following is the crucial technical step in the proof of the main
result.

\begin{theorem}\label{t-ZEfunctmodel}
Let $\Fil, \Fil_*, \E$ be separable Hilbert spaces. Let $\Theta\in
H^\infty(B(\Fil,\Fil_*))$ be a contractive holomorphic function.
Then,
$$
B(H(\Theta),\E)=B(H(\Theta),\E)S(\Theta)^*+Z_{\E}(S(\Theta)^*).
$$
\end{theorem}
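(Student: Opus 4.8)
The plan is, given an arbitrary $X\in B(H(\Theta),\E)$, to produce an explicit decomposition $X=Y\,S(\Theta)^*+\Xi\rho$ with $Y\in B(H(\Theta),\E)$, $\Xi\in B(\Fil_*,\E)$ and $\rho$ the ``zeroth coefficient'' map introduced below; since the reverse inclusion is trivial, this proves the theorem. Throughout I would apply Lemmas~\ref{l-projfunctmodel} and~\ref{l-rangeST} to $\Theta$, so that the spaces called $\E_*,\E$ there play the roles of $\Fil_*,\Fil$. The first step is a structural remark: $M(\Theta)$ is invariant under $S\oplus U$ and sits inside $H^2(\Fil_*)\oplus\ol{\Delta L^2(\Fil)}$, which is invariant under both $S\oplus U$ and $S^*\oplus U^*$; hence $H(\Theta)$ is invariant under $S^*\oplus U^*$, and the compression defining $S(\Theta)^*$ is in fact a restriction, $S(\Theta)^*=(S^*\oplus U^*)|_{H(\Theta)}$. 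In particular $S(\Theta)^{*n}(f_1\oplus f_2)=(S^{*n}f_1)\oplus(U^{*n}f_2)$ for every $f_1\oplus f_2\in H(\Theta)$.

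Next I would set $\rho\colon H(\Theta)\to\Fil_*$, $\rho(f_1\oplus f_2)=f_1(0)$, which is a contraction. Two observations drive the argument. First, $\Xi\rho\in Z_\E(S(\Theta)^*)$ for every $\Xi\in B(\Fil_*,\E)$: indeed $\rho\,S(\Theta)^{*n}(f_1\oplus f_2)=(S^{*n}f_1)(0)=\widehat{f_1}(n)$, so $\sum_{n=0}^\infty\|\Xi\rho\,S(\Theta)^{*n}f\|^2\le\|\Xi\|^2\sum_{n=0}^\infty\|\widehat{f_1}(n)\|^2=\|\Xi\|^2\|f_1\|^2\le\|\Xi\|^2\|f\|^2$. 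Second, $\rho^*\colon\Fil_*\to H(\Theta)$ is the map $v\mapsto P_{H(\Theta)}(v\oplus 0)$, so by Lemma~\ref{l-projfunctmodel} the first coordinate of $\rho^* v$ is $v-\Theta\,\Theta(0)^*v$; evaluating at $0$ gives $\rho\rho^* v=(I-\Theta(0)\Theta(0)^*)v$, that is $\rho\rho^*=I-\Theta(0)\Theta(0)^*$.

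With these in place, given $X$ I would put $\Xi:=X\rho^*$ and $W:=X-\Xi\rho=X(I-\rho^*\rho)$. Then $\Xi\rho\in Z_\E(S(\Theta)^*)$ by the first observation, while for every $e\in\E$ the second observation gives
$$
(W^*e)_1(0)=\rho W^*e=(I-\rho\rho^*)\rho X^*e=\Theta(0)\Theta(0)^*\rho X^*e\in\Theta(0)\Fil .
$$
By Lemma~\ref{l-rangeST} this says exactly that $\ran W^*\subseteq\ran S(\Theta)$, so Douglas's lemma provides a bounded $Y\in B(H(\Theta),\E)$ with $W=Y\,S(\Theta)^*$. Hence $X=Y\,S(\Theta)^*+\Xi\rho$, as desired.

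The main obstacle, and the reason the correction $\Xi\rho$ must be chosen with care, is that Douglas's lemma needs $\ran W^*$ inside the genuine (generally non-closed) range of $S(\Theta)$, not merely its closure; a naive splitting of $X$ along $\ker S(\Theta)^*$ or along $\{f:f_1(0)=0\}$ only lands $W^*$ in $\ol{\ran S(\Theta)}$. What makes the argument close is precisely the exact identity $\rho\rho^*=I-\Theta(0)\Theta(0)^*$: it forces $I-\rho\rho^*=\Theta(0)\Theta(0)^*$ to carry a left factor $\Theta(0)$, which is exactly the membership condition for $\ran S(\Theta)$ recorded in Lemma~\ref{l-rangeST}. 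So the crux is to identify the ``defect correction'' $\Xi\rho=X\rho^*\rho$ and to pin down this algebraic identity; everything else is bookkeeping resting on Lemmas~\ref{l-projfunctmodel} and~\ref{l-rangeST}.
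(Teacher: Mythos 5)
Your proof is correct and follows essentially the same route as the paper: your correction term $\Xi\rho=X\rho^*\rho$ is exactly the operator $X_1h=XP_{H(\Theta)}P_{\Fil_*\oplus\{0\}}\widehat{h}(0)$ used there, the identity $\rho\rho^*=I-\Theta(0)\Theta(0)^*$ is just a repackaging of Lemma \ref{l-projfunctmodel}, and the remainder is handled identically via Lemma \ref{l-rangeST} and Douglas's lemma. Your explicit remark that $S(\Theta)^*$ is the restriction of $S^*\oplus U^*$ to $H(\Theta)$ is a nice clarification of a step the paper uses implicitly in computing $S(\Theta)^{*n}h$.
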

\begin{proof}
Let $X\in B(H(\Theta),\E)$. Define $X_1:H(\Theta)\to \E$ as
$$
X_1h=XP_{H(\Theta)}P_{\Fil^*\oplus \{0\}}\widehat{h}(0)
$$
where for a function $h\in H(\Theta)$ we define $\widehat{h}(n)$ to
be its $n$-th Fourier coefficient, which lies in $\Fil_*\oplus
\Fil$. Given $e\in \E$ and $h=h_1\oplus h_2\in H(\Theta)$, we have
\begin{align*}
\left\langle X_1 h, e\right\rangle_{\E}
&=\left\langle XP_{H(\Theta)}P_{\Fil^*\oplus \{0\}}\widehat{h}(0),e\right\rangle_{\E}\\
&=\left\langle h_1(0)\oplus 0,X^*e\right\rangle_{H^2(\Fil_*)\oplus L^2(\Fil)}\\
&= \left\langle \widehat{h}(0),P_{\Fil_*\oplus \{0\}}\widehat{X^* e}(0)\right\rangle_{\Fil_*\oplus \Fil}\\
&=\left\langle h,P_{H(\Theta)}P_{\Fil_*\oplus
\{0\}}\widehat{X^*e}(0)\right\rangle_{H(\Theta)}
\end{align*}
whence
$$
X_1^*e=P_{H(\Theta)} P_{\Fil_*\oplus \{0\}}\widehat{X^*e}(0) .
$$
Set $X_2=X-X_1$ and $\widehat{X^*e}(0)=f_*\oplus f\in \Fil_*\oplus
\Fil$. Using Lemma \ref{l-projfunctmodel}, we find
$$
X_1^*e=(I-\Theta \Theta(0)^*)f_*\oplus \left(-\Delta \Theta(0)^*
f_*\right).
$$
A straightforward verification using Lemma \ref{l-rangeST}
establishes that the range of $X_2^*$ is contained in the range of
$S(\Theta)$. By Douglas's Lemma, this implies in turn that
$$
X_2\in B(H(\Theta),\E)S(\Theta)^*.
$$
Since $X=X_1+X_2$, it remains only to check that $X_1\in
Z_{\E}(S(\Theta)^*)$. First, we note that for $h=h_1\oplus h_2\in
H(\Theta)$ we have
$$
S(\Theta)^{*n}h=(P_{H^2(\Fil_*)}\ol{z}^n h_1)\oplus e^{-int}h_2
$$
and the Fourier coefficient of order zero of $S(\Theta)^{*n}h$ is
therefore equal to $\widehat{h}(n)$. Consequently,
$$
X_1S(\Theta)^{*n}h=XP_{H(\Theta)}P_{\Fil_*\oplus
\{0\}}\widehat{h}(n)
$$
and
$$
\sum_{n=0}^\infty \|X_1S(\Theta)^{*n}h\|^2\leq \|X\|^2
\sum_{n=0}^\infty \|\widehat{h}(n)\|^2 \leq \|X\|^2 \|h\|^2
$$
so that $X_1\in Z_{\E}(S(\Theta)^*)$.
\end{proof}

We now come to the main result of the paper (recall Definition
\ref{d-poly}).

\begin{theorem}\label{t-extcontraction}
Let $\E$ be a separable Hilbert space and let $S_{\E}:H^2(\E)\to
H^2(\E)$ be the unilateral shift. Then,
$\Ext^1_{\rm{poly}}(T,S_{\E}^*)=0$ for every operator $T$ which is
similar to a contraction.
\end{theorem}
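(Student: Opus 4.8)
The plan is to reduce, via Corollary~\ref{c-ext0crit} and the canonical decomposition of a contraction, to the functional-model statement already established in Theorem~\ref{t-ZEfunctmodel}. Since extension groups are invariant under similarity, and the defining condition $S_{\E}^{*N}XT^N=0$ of the polynomial subgroup is preserved by the isomorphisms implementing this invariance (as one checks directly), we may assume from the outset that $T$ itself is a contraction on a Hilbert space $\hil$. By Corollary~\ref{c-ext0crit}, the theorem is then equivalent to the identity
$$
B(\hil,\E)\,T+Z_{\E}(T)=B(\hil,\E).
$$
I would next write $\hil=\hil_u\oplus\hil_0$ for the canonical decomposition, with $T_u=T|\hil_u$ unitary and $T_0=T|\hil_0$ completely non-unitary, and observe that the displayed equality \emph{localizes}: writing $X\in B(\hil,\E)$ as a row $[\,X_u\ \ X_0\,]$ relative to this decomposition, it suffices to produce decompositions $X_u=Y_uT_u+W_u$ and $X_0=Y_0T_0+W_0$ with $W_u\in Z_{\E}(T_u)$ and $W_0\in Z_{\E}(T_0)$; then $X=YT+W$ with $Y=[\,Y_u\ \ Y_0\,]$ and $W=[\,W_u\ \ W_0\,]$, and the block-diagonal $W$ lies in $Z_{\E}(T)$ because $\|WT^n(h_u\oplus h_0)\|^2\leq 2\|W_uT_u^nh_u\|^2+2\|W_0T_0^nh_0\|^2$ for every $n\geq 0$ and every $h_u\oplus h_0\in\hil$.

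For the unitary summand there is nothing to do: $T_u$ is invertible, so $B(\hil_u,\E)T_u=B(\hil_u,\E)$ and one may take $W_u=0$. For the completely non-unitary summand I would invoke the Sz.-Nagy--Foias model theorem, but applied to the adjoint $T_0^*$ (which is again completely non-unitary): there exist separable Hilbert spaces $\Fil,\Fil_*$, a contractive holomorphic function $\Theta\in H^\infty(B(\Fil,\Fil_*))$, and a unitary $V\colon\hil_0\to H(\Theta)$ with $VT_0^*V^*=S(\Theta)$, equivalently $VT_0V^*=S(\Theta)^*$. Given $X_0\in B(\hil_0,\E)$, apply Theorem~\ref{t-ZEfunctmodel} to $X_0V^*\in B(H(\Theta),\E)$ to obtain $X_0V^*=\widetilde{Y}\,S(\Theta)^*+\widetilde{W}$ with $\widetilde{W}\in Z_{\E}(S(\Theta)^*)$; composing with $V$ on the right and using $S(\Theta)^{*n}V=VT_0^n$ together with the unitarity of $V$ yields $X_0=(\widetilde{Y}V)T_0+(\widetilde{W}V)$ with $\widetilde{W}V\in Z_{\E}(T_0)$. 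This establishes $B(\hil_0,\E)T_0+Z_{\E}(T_0)=B(\hil_0,\E)$, and assembling the two summands completes the proof.

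The genuinely hard work has already been carried out in Theorem~\ref{t-ZEfunctmodel} (and, upstream of it, in Theorem~\ref{t-innercrit} and Corollary~\ref{c-ext0crit}); the present argument is essentially bookkeeping. The one point that must not be mishandled is the placement of the adjoint: because Theorem~\ref{t-ZEfunctmodel} is formulated for $S(\Theta)^*$ rather than $S(\Theta)$, one has to run the model theorem on $T_0^*$ so that $T_0$ gets identified with $S(\Theta)^*$; feeding the model $T_0$ directly would leave one an adjoint away from the statement we can use. Everything else --- the reduction to a contraction, the canonical decomposition, and the transport of the identity across the unitary equivalence --- is routine.
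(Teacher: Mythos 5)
Your argument is correct, and its backbone is the same as the paper's: reduce to a contraction by similarity invariance, split off the unitary part via the canonical decomposition, realize the completely non-unitary part (after passing to the adjoint, exactly as you emphasize) as $S(\Theta)^*$ through the Sz.-Nagy--Foias model theorem, and then combine Theorem \ref{t-ZEfunctmodel} with Corollary \ref{c-ext0crit}. The one place where you genuinely diverge is the treatment of the decomposition and of the unitary summand. The paper works at the level of extension classes: it asserts that an element of $\Ext^1_{\rm{poly}}(T,S_{\E}^*)$ splits as $(X_1,X_2)$ along the reducing decomposition and then invokes Theorem \ref{thmunitarysplit} (projectivity of unitary modules) to conclude that the unitary component contributes nothing, so that only the completely non-unitary part matters. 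You instead invoke Corollary \ref{c-ext0crit} at the outset, so that everything reduces to the operator identity $B(\hil,\E)T+Z_{\E}(T)=B(\hil,\E)$, which you then localize to the two summands with the elementary norm estimate you record; the unitary summand is disposed of by the invertibility of $T_u$ alone. This buys you two small things: you never need the (nontrivial, imported) Theorem \ref{thmunitarysplit}, and you avoid having to verify the splitting of polynomially bounded cocycles and coboundaries across the reducing decomposition, a step the paper only sketches. The price is nothing, since the transport of the identity across the unitary equivalence $VT_0V^*=S(\Theta)^*$ is exactly the routine computation you carry out, and your remark that the polynomial condition $S_{\E}^{*N}XT^N=0$ is preserved under the similarity isomorphism $[X]\mapsto[XA^{-1}]$ is correct and fills in a point the paper passes over silently.
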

\begin{proof}
Since extension groups are invariant under similarity, we may assume
that $T\in B(\hil)$ is a contraction. Then, it is well-known that
there exists a reducing subspace $M\subset \hil$ with the property
that $T|M$ is completely non-unitary and $T|M^\perp$ is unitary.
According to this decomposition, it is easy to verify that any
bounded operator $X:\hil\to H^2(\E)$ giving rise to an element
$[X]\in \Ext^1_{\rm{poly}}(T,S_{\E}^*)$ can be written as
$X=(X_1,X_2)$, where $[X_1]\in \Ext^1_{\rm{poly}}(T|M,S_{\E}^*)$ and
$[X_2]\in \Ext^1_{\rm{poly}}(T|M^\perp,S_{\E}^*)$. Using Theorem
\ref{thmunitarysplit} we see that $[X]=0$ if and only if $[X_1]=0$.
Therefore, we may assume that $T$ (and hence $T^*$) is completely
non-unitary. By Theorem VI.2.3 of \cite{nagy2010}, we know that
$T^*$ is unitarily equivalent to $S(\Theta)$ for some contractive
operator-valued holomorphic function $\Theta$, so for our purposes
we may as well take $T^*$ to be equal to $S(\Theta)$. In light of
Theorem \ref{t-ZEfunctmodel}, we find
$$
B(H(\Theta),\E)=B(H(\Theta),\E)S(\Theta)^*+Z_{\E}(S(\Theta)^*)
$$
and thus an application of Corollary \ref{c-ext0crit} completes the
proof.
\end{proof}

Theorem \ref{t-isomproj} and Theorem \ref{t-extcontraction}
illustrate a clear difference between $S_{\E}$ and $S_{\E}^*$ on the
level of extension groups: $\Ext_{\rm{poly}}^1(T,S_{\E}^*)=0$ for
every contraction $T$ while $\Ext_{A(\D)}^1(T,S_{\E})=0$ only when
the contraction $T$ is similar to an isometry. The reader will
object immediately to the fact that we are considering the
polynomial subgroup in one case and the full group in the other. In
some sense however, there is no discrepancy between the two
settings. Indeed, by Theorem \ref{t-extscharact}, every element in
$\Ext^1_{A(\D)}(T,S_{\E})$ can be represented by an operator
$X:\hil\to H^2(\E)$ with range contained in $\E$. In particular, we
see that $S_{\E}^*X=0$, and thus $X$ is a polynomial operator.
Therefore, the group $\Ext_{A(\D)}^1(\cdot,S_{\E})$ coincides with
$\Ext_{\rm{poly}}^1(\cdot,S_{\E})$. This is not the case if $S_{\E}$
is replaced by $S_{\E}^*$, as is shown in Section 6.

\section{The case of non-contractive modules: Pisier's counterexample}
Much of the vanishing results for extension groups obtained in the
previous sections focus on extensions of the unilateral shift (and
its adjoint) by contractive modules. It is natural to wonder what
happens for extensions by polynomially bounded operators which are
not similar to a contraction. Unfortunately, few examples of such
operators are known. In fact, only the family of counterexamples
introduced by Pisier in \cite{pisier1997} is available. Let us
recall the details of his construction here.

Let $S_{\Fil}:H^2(\Fil)\to H^2(\Fil)$ be the unilateral shift with infinite
multiplicity, where
$$
\Fil=\bigoplus_{n=1}^\infty(\C^2)^{\otimes n}.
$$
Define
$$
V=\left(
\begin{array}{cc}
1 & 0\\
0 & -1
\end{array}
\right)
$$
and
$$
D=\left(
\begin{array}{cc}
0 & 0\\
1 & 0
\end{array}
\right).
$$
For $0\leq k \leq n-1$, define $C_{k,n}:(\C^2)^{\otimes n} \to
(\C^2)^{\otimes n}$ as
$$
C_{k,n}=V^{\otimes (k+1)}\otimes D \otimes I^{\otimes( n-k-2)}
$$
and for any $k\geq 0$ set
$$
W_k=\bigoplus_{n= k+1}^\infty C_{k,n}
$$
which acts on $\Fil$. It is well-known (see \cite{davidson1997} or
\cite{paulsen2002}) that the sequence of operators $\{W_k\}_k\subset
B(\Fil)$ satisfies the so-called \emph{canonical anticommutation
relations}. Given a sequence $\alpha=\{\alpha_n\}_{n=0}^\infty \subset \C$, we
define a Hankel operator $X_{\alpha}$ acting on $H^2(\Fil)$ by
$$
X_{\alpha}=(\alpha_{i+j}W_{i+j})_{i,j=0}^\infty
$$
and we set
$$
R(X_{\alpha})=\left(\begin{array}{cc}
S_{\Fil}^* & X_{\alpha} \\
0 & S_{\Fil}
\end{array} \right).
$$
The following result can be found in \cite{davidson1997} and
\cite{ricard2002}.

\begin{theorem}
The operator $R(X_{\alpha})$ is polynomially bounded if and only if
$$
\sup_{k\geq 0}(k+1)^2\sum_{i= k}^\infty|\alpha_i|^2
$$
is finite, and it is similar to a contraction if and only if
$$
\sum_{k=0}^\infty (k+1)^2|\alpha_k|^2
$$
is finite.
\end{theorem}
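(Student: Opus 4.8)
The plan is to pass to the block--triangular polynomial functional calculus of $R(X_\alpha)$ and reduce polynomial boundedness --- and, by Paulsen's theorem, similarity to a contraction, which is equivalent to complete polynomial boundedness --- to a uniform estimate on the off--diagonal term $\delta_{X_\alpha}(\phi)$ as $\phi$ runs over the unit ball of $A(\D)$ (resp. of $M_n(A(\D))$, $n\ge 1$), and then to evaluate these extremal norms with the anticommutation relations together with the Carleson--measure description of $BMOA$. The reduction is the computation from the proof of Lemma~\ref{l-finitebounded} with $T_1=S_\Fil^*$, $T_2=S_\Fil$, $X=X_\alpha$: the operator $\phi(R(X_\alpha))$ is block upper triangular, with diagonal entries $\phi(S_\Fil^*)$ and $\phi(S_\Fil)$ and corner entry $\delta_{X_\alpha}(\phi)$; since $S_\Fil$ is an isometry and $S_\Fil^*$ a contraction, both diagonal entries are completely contractive, so compressing to the corner and using von Neumann's inequality gives $\|\delta_{X_\alpha}(\phi)\|\le\|\phi(R(X_\alpha))\|\le 2\|\phi\|_\infty+\|\delta_{X_\alpha}(\phi)\|$. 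Thus $R(X_\alpha)$ is polynomially bounded iff $\sup\{\|\delta_{X_\alpha}(\phi)\|:\|\phi\|_\infty\le1\}<\infty$, and the matrix version of the same argument, together with Paulsen's theorem, gives that $R(X_\alpha)$ is similar to a contraction iff $\sup\{\|[\delta_{X_\alpha}(\phi_{rs})]\|:\|[\phi_{rs}]\|_{M_n(A(\D))}\le1,\ n\ge1\}<\infty$.

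Next, since $X_\alpha$ is a block Hankel operator one has $S_\Fil^*X_\alpha=X_\alpha S_\Fil$, hence $(S_\Fil^*)^jX_\alpha=X_\alpha S_\Fil^j$, so the double sum defining $\delta_{X_\alpha}$ collapses:
$$
\delta_{X_\alpha}(\phi)=\sum_{k\ge1}ka_k\,X_\alpha S_\Fil^{\,k-1}=X_\alpha\,\phi'(S_\Fil).
$$
Separating the anticommuting generators this reads $\delta_{X_\alpha}(\phi)=\sum_{k\ge0}\alpha_k\,(W_k\otimes\Gamma_k(\phi))$, where $\Gamma_k(\phi)=J_k\,T_{\phi'}$ acts on $H^2$, $T_{\phi'}$ is the scalar analytic Toeplitz operator, and $J_k$ is the self--adjoint partial isometry reversing $\operatorname{span}\{1,z,\dots,z^k\}$, so $J_k^*J_k=P_{[0,k]}$, the projection onto that span; the same identity holds with $\phi$ replaced by a matrix $\Phi$ and $\Gamma_k(\phi)$ by $(I_n\otimes J_k)T_{\Phi'}$. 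Using $J_k^*J_k=P_{[0,k]}$ one gets the clean identities
$$
\sum_{k\ge0}|\alpha_k|^2\,\Gamma_k(\phi)^*\Gamma_k(\phi)=T_{\phi'}^*\,W\,T_{\phi'},\qquad W:=\sum_{k\ge0}|\alpha_k|^2P_{[0,k]}=\operatorname{diag}\!\Big(\sum_{k\ge m}|\alpha_k|^2\Big)_{m\ge0},
$$
and, by a short Fourier computation, $\sum_k|\alpha_k|^2\Gamma_k(\phi)\Gamma_k(\phi)^*=T_{(\phi^\flat)'}^*\,W\,T_{(\phi^\flat)'}$ with $\phi^\flat(z)=\overline{\phi(\bar z)}$, $\|\phi^\flat\|_\infty=\|\phi\|_\infty$, together with the matricial analogues.

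The analytic core is then the fermionic Khintchine inequality coming from the anticommutation relations: for bounded operators $A_k$,
$$
\Big\|\sum_kW_k\otimes A_k\Big\|^2\ \asymp\ \max\Big\{\big\|\textstyle\sum_kA_k^*A_k\big\|,\ \big\|\textstyle\sum_kA_kA_k^*\big\|\Big\}
$$
with universal constants. Taking $A_k=\alpha_k\Gamma_k(\phi)$ identifies $\|\delta_{X_\alpha}(\phi)\|^2$, up to constants, with $\max\{\|W^{1/2}T_{\phi'}\|^2,\|W^{1/2}T_{(\phi^\flat)'}\|^2\}$. Writing $w_m=\sum_{k\ge m}|\alpha_k|^2$ and $M=\sup_m(m+1)^2w_m$, the key estimate is
$$
\|W^{1/2}T_g\,h\|^2=\sum_mw_m|\widehat{gh}(m)|^2\le M\sum_m(m+1)^{-2}|\widehat{gh}(m)|^2\asymp M\int_\D|g(z)h(z)|^2(1-|z|^2)\,dA(z);
$$
for $g=\phi'$ (or $g=(\phi^\flat)'$), the inclusion $H^\infty\subset BMOA$ and the Carleson--measure characterisation of $BMOA$ make $|g(z)|^2(1-|z|^2)\,dA(z)$ a Carleson measure of mass $\lesssim\|\phi\|_\infty^2$, so the right side is $\lesssim M\|\phi\|_\infty^2\|h\|^2$ and $\sup_{\|\phi\|_\infty\le1}\|\delta_{X_\alpha}(\phi)\|^2\lesssim M$. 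Conversely, testing with $h=1$ and $\phi(z)=z^{k+1}$ gives $\|\delta_{X_\alpha}(\phi)\|^2\gtrsim(k+1)^2w_k$, so the supremum is $\gtrsim M$; with the reduction this proves that $R(X_\alpha)$ is polynomially bounded if and only if $\sup_k(k+1)^2\sum_{i\ge k}|\alpha_i|^2<\infty$.

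For the second assertion one runs the same machine with matrix symbols, so that $\|[\delta_{X_\alpha}(\phi_{rs})]\|^2$ is comparable to $\|(I_n\otimes W)^{1/2}T_{\Phi'}\|^2$ plus an analogous flipped term, and $\|(I_n\otimes W)^{1/2}T_{\Phi'}\eta\|^2=\sum_mw_m\|\widehat{\Phi'\eta}(m)\|^2=\sum_k|\alpha_k|^2\|P_{[0,k]}(\Phi'\eta)\|^2$. Here the scalar argument breaks down, and this is the step I expect to be the main obstacle: for operator--valued $\Phi$ the quantity $\sum_l\|\widehat\Phi(l)\|^2$ is no longer controlled by $\|\Phi\|_\infty^2$ uniformly in the dimension (operator--valued $H^\infty$ is not contained in operator $BMO$), so the Carleson argument cannot be invoked dimension--freely. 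Carrying the estimate through then shows that a uniform bound holds exactly when the weights $|\alpha_k|^2$ are summable against $(k+1)^2$, i.e. $R(X_\alpha)$ is similar to a contraction if and only if $\sum_k(k+1)^2|\alpha_k|^2<\infty$; necessity is obtained by testing against a suitably interfering (e.g. random) matrix of symbols that saturates the bound while keeping $\|\Phi\|_\infty$ under control. This quantitative scalar--versus--operator ($BMO$ versus operator $BMO$) gap, with the sharp exponent $(k+1)^2$ on both sides, is exactly the phenomenon underlying Pisier's counterexample to the Halmos problem; a secondary technical point is that our $W_k$ satisfy the anticommutation relations only up to the finite--dimensional corrections coming from the grading of $\Fil$, so the Khintchine inequality above must be justified either directly or by approximation.
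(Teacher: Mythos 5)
First, a point of reference: the paper does not prove this theorem at all --- it is quoted from \cite{davidson1997} and \cite{ricard2002} --- so there is no internal proof to compare against; your sketch is in effect an attempt to reconstruct those papers. Your first half (the polynomial boundedness criterion) is essentially correct and follows the Davidson--Paulsen route: the reduction $\delta_{X_\alpha}(\phi)=X_\alpha\phi'(S_{\Fil})$ from $S_{\Fil}^*X_\alpha=X_\alpha S_{\Fil}$, the decomposition $X_\alpha=\sum_k\alpha_k H_k\otimes W_k$ with $H_k$ the flip on $\operatorname{span}\{1,\dots,z^k\}$, the row--column (CAR Khintchine) estimate reducing everything to $\|W^{1/2}T_{\phi'}\|$ with $W=\operatorname{diag}\bigl(\sum_{k\geq m}|\alpha_k|^2\bigr)$, the Carleson--measure/BMOA upper bound, and the monomial test $\phi=z^{k+1}$, $h=1$ for the lower bound; I checked your identity $\sum_k|\alpha_k|^2\Gamma_k(\phi)\Gamma_k(\phi)^*=T_{(\phi^\flat)'}^*WT_{(\phi^\flat)'}$ and it is right, and the truncation issue with the $W_k$ (they act blockwise on $\bigoplus_n(\C^2)^{\otimes n}$) is harmless since the row--column bounds survive restriction to each block.

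The genuine gap is the entire second assertion. Observing that operator-valued $H^\infty$ does not embed dimension-freely into operator BMO only shows that \emph{your} upper-bound argument fails for matrix symbols; it proves neither that the cb norm is actually infinite when $\sum_k(k+1)^2|\alpha_k|^2=\infty$, nor that it is finite when the sum converges. Both directions need real arguments, and neither is supplied. For necessity one must exhibit a concrete matricial test, e.g. $\Phi(z)=\sum_k\beta_k z^{k+1}W_k^*$, use the row--column estimate on the $W_k^*$ to get $\|\Phi\|_\infty\lesssim(\sum_k|\beta_k|^2)^{1/2}$, and pair $\delta^{(n)}_{X_\alpha}(\Phi)$ against suitable vectors so that the anticommutation relations make the cross terms cancel and the quantity $\sum_k(k+1)\alpha_k\beta_k$ survives; choosing $\beta_k=(k+1)\overline{\alpha_k}$ then yields $\sum_k(k+1)^2|\alpha_k|^2\lesssim\|\delta_{X_\alpha}\|_{cb}^2$. ``Testing against a suitably interfering (e.g. random) matrix of symbols'' is not a proof of this. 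For sufficiency one needs an actual cb upper bound, for instance via $\|\delta^{(n)}_{X_\alpha}(\Phi)\|\leq\bigl(\sum_k|\alpha_k|^2\|(I_n\otimes H_k)T_{\Phi'}\|^2\bigr)^{1/2}$ together with the estimate $\|(I_n\otimes P_{[0,k]})T_{\Phi'}\|\lesssim(k+1)\|\Phi\|_\infty$ for the compression of the derivative Toeplitz operator to polynomials of degree at most $k$; this gives $\|\delta_{X_\alpha}\|_{cb}\lesssim\bigl(\sum_k(k+1)^2|\alpha_k|^2\bigr)^{1/2}$, and then Paulsen's theorem finishes. These two missing steps are precisely the substance of the cited references, so as it stands your proposal establishes only the polynomial boundedness half of the theorem.
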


We noted at the end of Section 5 that
$$
\Ext^1_{A(\D)}(\cdot,S_{\E})=\Ext^1_{\rm{poly}}(\cdot,S_{\E}).
$$
However, things are different for $\Ext^1_{A(\D)}(\cdot,S_{\E}^*)$.
Indeed, if the sequence $\alpha=\{\alpha_n\}_n$ is chosen such that
$R(X_{\alpha})$ is polynomially bounded but not similar to a
contraction, then $[X_{\alpha}]$ is a non-trivial element of
$\Ext_{A(\D)}^1(S_{\Fil},S_{\Fil}^*)$. In particular, Theorem
\ref{t-extcontraction} implies that
$$
\Ext_{\rm{poly}}^1(S_{\Fil},S_{\Fil}^*) \neq
\Ext_{A(\D)}^1(S_{\Fil},S_{\Fil}^*).
$$
We do not know whether equality holds if we require that the shift be of
finite multiplicity.

The remainder of this section is dedicated to the study of the group
$$
\Ext_{\rm{poly}}^1(R(X_{\alpha}),S_{\C})=\Ext_{A(\D)}^1(R(X_{\alpha}),S_{\C}).
$$
Of particular interest is the case where $R(X_{\alpha})$ is not
similar to a contraction, which lies outside the reach of Theorem
\ref{t-isomproj} where little is known.

We start by giving an alternative formulation of Corollary
\ref{c-ext0crit} adapted to the unilateral shift of multiplicity
one. For a Hilbert module $(\hil, T)$, define $Z(T)\subset \hil$ to
be the set consisting of those vectors $x\in \hil$ with the property
that there exists a constant $c_x>0$ such that
$$
\sum_{n=0}^\infty |\langle h,T^{*n} x\rangle|\leq c_x \|h\|^2
$$
for every $h\in \hil$.

\begin{lemma}\label{l-ext0critmult1}
Let $(\hil,T)$ be a Hilbert module and let $S_{\C}:H^2\to H^2$ be
the unilateral shift with multiplicity one. Then
$$
T^*\hil+Z(T)=\hil
$$
if and only if
$$
\Ext^1_{\rm{poly}}(T,S_{\C}^*)=0.
$$
\end{lemma}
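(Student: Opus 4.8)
The plan is to obtain this lemma as the multiplicity-one restatement of Corollary \ref{c-ext0crit}, transporting all the objects through the Riesz representation theorem. Every $X\in B(\hil,\C)$ has the form $Xh=\langle h,\xi_X\rangle$ for a unique $\xi_X\in\hil$, and the map $X\mapsto\xi_X$ is a conjugate-linear bijection of $B(\hil,\C)$ onto $\hil$. Being additive and bijective, it carries sums of subspaces to sums of subspaces, so it suffices to check that it sends $B(\hil,\C)T$ onto $T^*\hil$ and $Z_{\C}(T)$ onto $Z(T)$; once this is done, the equality $B(\hil,\C)T+Z_{\C}(T)=B(\hil,\C)$ will be equivalent to $T^*\hil+Z(T)=\hil$.

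For the first identification I would note that $(XT)h=\langle Th,\xi_X\rangle=\langle h,T^*\xi_X\rangle$, so that $\xi_{XT}=T^*\xi_X$; as $X$ runs over $B(\hil,\C)$ its image $\xi_X$ runs over all of $\hil$, whence $B(\hil,\C)T$ corresponds exactly to $T^*\hil$. For the second, since the codomain is $\C$ we have $\|XT^nh\|=|\langle h,T^{*n}\xi_X\rangle|$ for every $n\geq 0$ and $h\in\hil$, so the defining estimate of $Z_{\C}(T)$, namely $\sum_{n=0}^\infty\|XT^nh\|^2\leq c_X\|h\|^2$, becomes precisely the defining estimate for the membership $\xi_X\in Z(T)$. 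Hence $Z_{\C}(T)$ corresponds exactly to $Z(T)$.

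Combining the two identifications, $B(\hil,\C)T+Z_{\C}(T)=B(\hil,\C)$ holds if and only if $T^*\hil+Z(T)=\hil$, and by Corollary \ref{c-ext0crit} applied with $\E=\C$ the former equality is equivalent to $\Ext^1_{\rm{poly}}(T,S_{\C}^*)=0$. This completes the argument. I do not anticipate any genuine obstacle; the only point demanding a moment's care is that the Riesz correspondence is conjugate-linear rather than linear, which is harmless since we are only comparing the additive subsets involved.
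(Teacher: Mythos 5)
Your proposal is correct and follows essentially the same route as the paper: represent each $X\in B(\hil,\C)$ via Riesz as $Xh=\langle h,\xi_X\rangle$, check that this identification carries $B(\hil,\C)T$ to $T^*\hil$ and $Z_{\C}(T)$ to $Z(T)$, and invoke Corollary \ref{c-ext0crit}; the paper simply calls this verification routine, whereas you spell it out (and correctly note the harmlessness of conjugate-linearity).
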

\begin{proof}
Note that any operator $X:\hil\to \C$ is given by $Xh=\langle
h,\xi\rangle$ for some $\xi\in \hil$. It is a routine verification to
establish that under this identification, the equality
$$
B(\hil,\C)T+ Z_{\C}(T)= B(\hil,\C)
$$
corresponds to
$$
T^{*}\hil+Z(T)=\hil
$$
so the result follows from Corollary \ref{c-ext0crit}.
\end{proof}

This corollary offers the advantage over the more complicated
general version that the equality we are interested in takes places
inside the Hilbert space $\hil$ instead of inside the Banach space
$B(\hil,\E)$. Note also that the discussion at the end of Section 4
shows that $Z(S_{\Fil}^*)=H^\infty(\Fil)$. We now state a simple
result.

\begin{lemma}\label{l-kernel}
Let $(\hil, T)$ be a Hilbert module. Any operator $X\in Z_{\E}(T)$
for which $[X]=0$ in $\Ext_{A(\D)}^1(T,S_{\E})$ belongs to
$B(\hil,\E)T$.
\end{lemma}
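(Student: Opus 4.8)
The plan is to unwind both hypotheses using the characterizations recorded earlier in the paper, and then to exploit the fact that the range of $X$ is contained in the constant functions $\E\subset H^2(\E)$.

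First I would translate the hypotheses. Since $X\in Z_{\E}(T)$, Theorem \ref{t-extscharact} shows that $X$, viewed as an operator $\hil\to H^2(\E)$ with range in $\E$, determines a class $[X]\in\Ext_{A(\D)}^1(T,S_{\E})$; the assumption is that this class is trivial. By the description of extension groups in Theorem \ref{t-extchar}, applied with $T_1=S_{\E}$ and $T_2=T$, triviality of $[X]$ means exactly that there exists a bounded operator $L:\hil\to H^2(\E)$ such that $X=S_{\E}L-LT$.

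Next I would apply the orthogonal projection $P_{\E}$ of $H^2(\E)$ onto $\E$. On the one hand, $P_{\E}X=X$ because the range of $X$ lies in $\E$. On the other hand, $S_{\E}$ maps $H^2(\E)$ into $zH^2(\E)$, so $P_{\E}S_{\E}=0$ and hence $P_{\E}S_{\E}L=0$. Combining these facts gives $X=P_{\E}X=P_{\E}(S_{\E}L-LT)=-(P_{\E}L)T$. Setting $A=-P_{\E}L\in B(\hil,\E)$, we conclude $X=AT\in B(\hil,\E)T$, which is the assertion of the lemma. (This is the same device already used in the proof of Corollary \ref{c-bddbelow}.)

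There is no genuine obstacle here; the only point requiring a little care is the bookkeeping between the two incarnations of $X$ — as an operator $\hil\to\E$ and as an operator $\hil\to H^2(\E)$ with range in the constants — which is precisely what legitimizes the identity $P_{\E}X=X$ used in the computation.
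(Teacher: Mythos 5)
Your proof is correct and follows essentially the same route as the paper: triviality of $[X]$ gives $X=S_{\E}L-LT$, and applying $P_{\E}$ (using that $X$ has range in $\E$ and $P_{\E}S_{\E}=0$) yields $X=-(P_{\E}L)T\in B(\hil,\E)T$, which is exactly the device the paper borrows from the proof of Corollary \ref{c-bddbelow}.
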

\begin{proof}
If $[X]=0$, then $X=S_{\E}L-LT$ and arguing as in the proof of
Corollary \ref{c-bddbelow} we find that $X=L'T$.
\end{proof}

Let us now apply this lemma to the study of
$\Ext_{\rm{poly}}^1(R(X_{\alpha}),S_{\C})$. Using the fact that
$S_{\Fil}^*X=XS_{\Fil}$, it is readily verified that
$$
R(X_\alpha)^{*n}=\left(\begin{array}{cc}
S_{\Fil}^n &  0\\
nX^*S_{\Fil}^{n-1} & S_{\Fil}^{*n}
\end{array} \right)
$$
for every integer $n\geq 1$. Thus, for $h\in H^2(\Fil)$ we have that
$h\oplus 0 \in Z(R(X_\alpha))$ if and only if
$$
\sum_{n=1}^\infty \left|\left\langle \left(
\begin{array}{c}
z^n h \\
nX^* z^{n-1}h
\end{array}
\right),
 \left(
 \begin{array}{c}
  g_1 \\
  g_2
\end{array}
\right) \right\rangle\right|^2 \leq c \|g\|^2
$$
for some constant $c>0$ and every $g=g_1\oplus g_2\in
H^2(\Fil)\oplus H^2(\Fil)$. Consequently, $h\oplus 0 \in
Z(R(X_\alpha))$ is equivalent to $h\in Z(S_{\Fil}^*)$ and
$$
\sum_{n=1}^\infty |\langle  nX^* z^{n-1}h, g\rangle|^2\leq c \|g\|^2
$$
for every $g\in H^2(\Fil)$. Notice at this point that for $\omega\in
\Fil$ we have
$$
X^* z^{n}\omega=\sum_{m=0}^\infty z^m \ol{\alpha_{m+n}}
W^{*(m+n)}\omega.
$$
Let $\omega=e_1\oplus 0\oplus 0 \oplus \ldots\in \Fil$ where
$e_1=(1,0)\in \C^2$. Then, $W^*_k \omega=0$ for $k\geq 1$ so that
$$
nX^* z^{n-1}\omega=0
$$
for $n\geq 2$ and thus
$$
\sum_{n=1}^\infty |\langle  nX^* z^{n-1}\omega, g\rangle|^2\leq
\|X^*\omega\|^2 \|g\|^2
$$
for every $g\in H^2(\Fil)$. In addition, it is clear that $\omega\in
H^\infty(\Fil)=Z(S^*_{\Fil})$ so in fact $\omega\in
Z(R(X_{\alpha}))$.

Define now $\Omega:H^2(\Fil)\oplus H^2(\Fil)\to \C$ by
$$
\Omega (f_1\oplus f_2)=\langle f_1(0), \omega \rangle_{\Fil}.
$$
Since $\omega\in Z(R(X_{\alpha}))$, we have that $\Omega\in Z_{\C}(R(X_{\alpha}))$, whence
$$
[\Omega]\in \Ext_{A(\D)}^1(R(X_{\alpha}),S_{\C})
$$
by Theorem \ref{t-extscharact}. Moreover,
 $\Omega (\omega\oplus 0)=1$ and
$R(X_{\alpha})(\omega\oplus 0)=0$ so that
$$
\Omega \ker R(X_{\alpha})\neq 0
$$
and thus $[\Omega]\neq 0$ in $\Ext_{A(\D)}^1(R(X_{\alpha}),S_{\C})$
by Lemma \ref{l-kernel}. In other words,
$$
\Ext_{A(\D)}^1(R(X_{\alpha}),S_{\C})\neq 0.
$$
It is easy to see that this argument can be adapted to show that
$$
\Ext_{A(\D)}^1(R(X_{\alpha}),S_{\E})\neq  0
$$
for every separable Hilbert space $\E$. Note finally that
$[\Omega]=0$ in $\Ext_{A(\D)}^1(R(X_{\alpha}),S^*_{\C})$ by Theorem
\ref{t-innercrit}.

In conclusion, let us mention that the question of whether or not
$$\Ext_{\rm{poly}}^1(R(X_{\alpha}),S^*_{\C})$$ vanishes (in the case
where $R(X_{\alpha})$ is not similar to a contraction, of course)
remains open. Given its direct relation to the projectivity of the
unilateral shift of multiplicity one, this problem is obviously
meaningful. We hope that Lemma \ref{l-ext0critmult1} may help settle
it in the future.

\bibliography{biblio}
\bibliographystyle{plain}

\end{document}